\documentclass[12pt]{article}
\usepackage{amsmath}
\usepackage{amssymb}
\usepackage{amsthm}
\usepackage[utf8]{inputenc}
\usepackage[english]{babel}
\usepackage{graphicx}
\usepackage{rotating}
\usepackage{array}
\newtheorem{theorem}{Theorem}[section]

\newtheorem{lemma}[theorem]{Lemma}
\newcommand\pfree{\mathop{\mbox{$(q^m-1)$-$\mathit{free}$}}}
\newcommand\ufree{\mathop{\mbox{$u$-$\mathit{free}$}}}

\newcommand\llfree{\mathop{\mbox{$l_1$-$\mathit{free}$}}}
\newcommand\lllfree{\mathop{\mbox{$l_2$-$\mathit{free}$}}}

\begin{document}
\title{Existence of Primitive Pairs with Prescribed Traces over Finite Fields }
\author{Hariom Sharma, R. K. Sharma}

\date{}
\maketitle
\begin{center}
	\textit{Department of Mathematics\\
	Indian Institute of Technology Delhi\\
	New Delhi, 110016, India}
\end{center}

\begin{abstract}
Let $F=\mathbb{F}_{q^m}$, $m>6$, $n$ a positive integer, and $f=p/q$ with $p$, $q$ co-prime irreducible polynomials in $F[x]$ and deg$(p)$ $+$ deg$(q)= n$. A sufficient condition has been obtained for the existence of primitive pairs $(\alpha, f(\alpha))$ in $F$ such that for any prescribed $a, b$ in $E=\mathbb{F}_q$, Tr$F/E (\alpha) = a$ and Tr$F/E (\alpha^{-1}) = b$. Further, for every positive integer $n$, such a pair definitely exists for large enough $(q,m)$. The case $n = 2$ is dealt separately and proved that such a pair exists for all $(q,m)$ apart from at most $64$ choices.
\end{abstract}

\textbf{Keywords:} Finite Fields, Characters, Primitive element\\
2010 Math. Sub. Classification: 12E20, 11T23
\footnote{email: hariomsharma638@gmail.com, rksharmaiitd@gmail.com}
\section{Introduction}

Let $\mathbb{F}_q$ be a finite field with $q=p^k$ elements where $p,k\in \mathbb{N}$ and $p$ is a prime number. Multiplicative cyclic group of non zero elements is denoted by $\mathbb{F}_q^*$ and the generator of $\mathbb{F}_q^*$ is called \textit{primitive element} in $\mathbb{F}_q$. For a positive integer $m(\geq 7)$, let $\mathbb{F}_{q^m}$ be the field extension of $\mathbb{F}_q$ of degree $m$. Indeed, an element $\alpha \in \mathbb{F}_{q^m}$ is primitive if and only if it is a zero of an irreducible polynomial of degree $m$ over $\mathbb{F}_q$, the irreducible polynomial is known as \textit{primitive polynomial}. Also, the trace of $\alpha$ over $\mathbb{F}_q$, denoted by $\text{Tr}_{\mathbb{F}_{q^m}/ \mathbb{F}_q}(\alpha)$ is given by $\alpha+\alpha^q+\cdots+\alpha^{q^{m-1}}$.

 Primitive elements are used as fundamental tool in many cryptographic schemes(e.g., Diffie-Hellmen key exchange protocol). More precisely, they find several applications in cryptography and coding theory \cite{PPR3}. Therefore, the study of primitive elements and primitive polynomials is an active area of research. Another interesting problem related to primitive elements is study of primitive pairs. For a rational function $f(x)\in \mathbb{F}_{q^m}(x)$ and $\alpha \in \mathbb{F}_{q^m}$, we call  a pair $(\alpha, f(\alpha))\in \mathbb{F}_{q^m} \times \mathbb{F}_{q^m}$  a \textit{primitive pair} in $\mathbb{F}_{q^m}$ if both $\alpha$ and $f(\alpha)$ are primitive elements in $\mathbb{F}_{q^m}$. Generally, $f(\alpha)$ not necessarily be primitive element for a primitive $\alpha\in \mathbb{F}_q$, for example, if $f(x)=x^2+2\in \mathbb{F}_5(x)$, then $f(\alpha)$ is not primitive for any primitive $\alpha\in \mathbb{F}_5$.
 
 It is both of theoretical importance and natural challenge to establish the existence of primitive elements with some prescribed conditions. Many researchers worked in this direction \cite{PPR5,PPR6,cao2014primitive}. D. Jungnickel and S. A. Vanstone \cite{jungnickel1989primitive} proved the existence of primitive element $\omega$ in $\mathbb{F}_{q^m}$ with prescribed $\text{Tr}_{\mathbb{F}_{q^m}/ \mathbb{F}_q}(\omega)$ in $\mathbb{F}_q$ for all pairs $(q,m)$ excluding finitely many. Cohen \cite{cohen2005primitive}, extended the result and established it for each pair except $\text{Tr}_{\mathbb{F}_{q^m}/ \mathbb{F}_q}(\omega)= 0$ if $m=2$ and $(4,3)$. Chou and Cohen \cite{chou2001primitive} resolved completely the question of whether there exists a primitive element $\alpha$ in $\mathbb{F}_{q^m}$ such that both $\alpha$ and $\alpha^{-1}$ have trace zero over $\mathbb{F}_q$.
 
 First in $1985$, Cohen \cite{PPR1} studied the existence of primitive pairs $(\alpha, f(\alpha))$ over finite field $\mathbb{F}_q$ where $f(x)=x+a, a\in \mathbb{F}_q$. In 2014, Cao and Wang \cite{cao2014primitive} considered the existence
 of the primitive pairs  with $f(x)=\frac{x^2+1}{x}$ in the finite field $\mathbb{F}_{q^m}$ and got that when $m \geq 29$,
 there are such elements with 
$\text{Tr}_{\mathbb{F}_{q^m}/ \mathbb{F}_q}(\alpha)=a$ and $\text{Tr}_{\mathbb{F}_{q^m}/ \mathbb{F}_q}(\alpha^{-1})=b$ for any pair of prescribed $a, b\in \mathbb{F}_q^*$. For the same rational function, in 2018, Anju, Sharma and Cohen \cite{PPR5}, obtained a sufficient condition for  existence of a primitive pair with $\text{Tr}_{\mathbb{F}_{q^m}/ \mathbb{F}_q}(\alpha)=a$ for any prescribed $a\in \mathbb{F}_q$ and proved the existence of such elements for all pairs $(q,m)$, $m\geq 5$. Later in 2019, Sharma and Gupta \cite{PPR6} generalized the rational function to $\lambda_A(x)$, where  $$\lambda_A(x)=\frac{ax^2+bx+c}{dx+e},$$ 
for any matrix $A=\begin{pmatrix}
a&b&c\\0&d&e
\end{pmatrix}\in M_{2\times 3}(\mathbb{F}_{q^m})$ of rank $2$ and if $\lambda_A(x)=\beta x$ or $\beta x^2$ for some $\beta \in \mathbb{F}_{q^n}$ then $\beta =1$. Next, for $m\geq 7$, they proved that for any such matrix $A$ over finite field $\mathbb{F}_{q^m}$ of  characteristic  $2$ there exists primitive pairs $(\alpha, \lambda_A(\alpha))$ in $\mathbb{F}_{q^m}$ such that for any prescribed $\mu, \nu\in \mathbb{F}_q$, $\text{Tr}_{\mathbb{F}_{q^m}/ \mathbb{F}_q}(\alpha)=\mu$ and $\text{Tr}_{\mathbb{F}_{q^m}/ \mathbb{F}_q}(\alpha^{-1})=\nu$ except for at most $25$ choices of $(q,m)$.    

 In this paper, we take $f(x)$ to be  more general rational function and $\mathbb{F}_q$ be finite field of any prime characteristic $p$.  We propose the problem as follows.
  For a rational function  $f(x)=\frac{a_{n_1}x^{n_1}+ \cdots+a_0}{b_{n_2}x^{n_2}+\cdots+b_0} \in \mathbb{F}_{q^m}(x)$, we assume that  $p(x)=a_{n_1}x^{n_1}+ \cdots+a_0$, $q(x)=b_{n_2}x^{n_2}+\cdots+b_0$ and $a_{n_1}, b_{n_2}\neq 0$. For $n_1, n_2\in \mathbb{N}\cup \{0\}$ , define a subset of $\mathbb{F}_{q^m}(x)$ by
\[
  R_{n_1, n_2} =\left\lbrace f(x) \in \mathbb{F}_{q^m}(x) \;\middle|\;
  \begin{tabular}{@{}l@{}}
     $p(x)$ and $q(x)$ are co-prime irreducible polynomials\\ over $\mathbb{F}_{q^m}$ with deg$(p(x)=n_1$, deg$(q(x)=n_2))$ 
   \end{tabular}
  \right\rbrace,
\] and set of pairs as
\[
Q_{n_1,n_2} =\left\lbrace (q,m)  \;\middle|\;
\begin{tabular}{@{}l@{}}
for each $f(x)\in R_{n_1,n_2}$, there exists a primitive pair \\$(\alpha, f(\alpha)) $ in  $ \mathbb{F}_{q^m} $ such that, for any prescribed $a$ and $b$ \\in $\mathbb{F}_q,$ $\text{Tr}_{\mathbb{F}_{q^m}/ \mathbb{F}_q}(\alpha)=a$ and $\text{Tr}_{\mathbb{F}_{q^m}/ \mathbb{F}_q}(\alpha^{-1})=b$\end{tabular}
\right\rbrace.
\]
Let $R_n=\bigcup\limits_{n_1+n_2=n}R_{n_1,n_2}$. 
and $Q_n=\bigcap\limits_{n_1+n_2=n}Q_{n_1,n_2}$.\\
 For each $n\in \mathbb{N}$, first we establish a sufficient condition on $q^m$ such that $(q,m)\in Q_{n}$. Further using a sieving modification of this sufficient condition we proved following result.
 
 \begin{theorem}\label{case n=2}
 Let $m, p, k\in \mathbb{N}$ such that $p$ is a prime and $m\geq 7$. If $q=p^k$, then $(q,m)\in Q_2$ unless one of the following holds :\\
 \begin{enumerate}
 \item $2\leq q \leq 16$ or $q=19, 23, 25, 31, 37, 43, 49, 79$ with $m=7;$
 \item $2\leq q\leq 31$ or $q=37, 41, 43, 47, 83$ with $m=8;$
 \item $2\leq q\leq 7$ or $q=9, 11, 16$ with $m=9;$
 \item $2\leq q\leq 7$ with $m=10;$
 \item $q=2,3$ with $m=11;$
 \item $q=2, 3, 4, 5$ with $m=12;$
 \item $q=2$ with $m=14, 16, 18, 20, 24.$
 \end{enumerate}

 \end{theorem}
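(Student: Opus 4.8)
The plan is to prove the stronger conditional statement — a sufficient numerical condition on $q^m$ under which the required primitive pair exists — and then dispose of the finitely many leftover $(q,m)$ by sieving and computation. Since $\alpha\in\mathbb{F}_{q^m}$ is primitive precisely when it is $(q^m-1)$-free, I would proceed by the standard character-sum method. For a divisor $e\mid q^m-1$ put $\theta(e)=\phi(e)/e$ and let $\rho_e(x)=\theta(e)\sum_{d\mid e}\frac{\mu(d)}{\phi(d)}\sum_{\mathrm{ord}(\chi)=d}\chi(x)$ be the weighted characteristic function of the $e$-free elements of $\mathbb{F}_{q^m}^*$; for $a\in E$ let $\frac1q\sum_{\psi\in\widehat E}\psi\bigl(\mathrm{Tr}_{F/E}(x)-a\bigr)$ be the characteristic function of $\{x:\mathrm{Tr}_{F/E}(x)=a\}$. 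Then I form the weighted count
\[
 N_{a,b}(e_1,e_2)=\frac1{q^2}\sum_{\substack{\alpha\in\mathbb{F}_{q^m}^*\\ p(\alpha)q(\alpha)\neq0}}\rho_{e_1}(\alpha)\,\rho_{e_2}(f(\alpha))\sum_{\psi_1,\psi_2\in\widehat E}\overline{\psi_1(a)\psi_2(b)}\,\psi_1\!\bigl(\mathrm{Tr}_{F/E}(\alpha)\bigr)\psi_2\!\bigl(\mathrm{Tr}_{F/E}(\alpha^{-1})\bigr),
\]
and note that $N_{a,b}(q^m-1,q^m-1)>0$ already forces the desired pair to exist, so it suffices to bound this quantity below.

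Expanding the product and writing each $\psi_i\circ\mathrm{Tr}_{F/E}$ as an additive character $\hat\psi$ of $F$, the inner sum becomes $\sum_{\alpha}\chi_1(\alpha)\chi_2(f(\alpha))\hat\psi(c_1\alpha+c_2\alpha^{-1})$ with $c_1,c_2\in E$. The term with $\chi_1=\chi_2$ trivial and $c_1=c_2=0$ gives the main term $\theta(e_1)\theta(e_2)q^{-2}\bigl(q^m-O(1)\bigr)$; for every other term I would check that, because $p$ and $q$ are coprime irreducibles and (for $n=2$) $f$ is a Möbius transformation $(a_1x+a_0)/(b_1x+b_0)$, the rational function $x^{a_1}f(x)^{a_2}$ is never a proper power times a constant when $(\chi_1,\chi_2)\neq(1,1)$ — the points $0,\infty$ and the zeros of numerator and denominator being pairwise distinct forces the relevant character to be trivial — so the Weil bound for mixed multiplicative–additive character sums on $\mathbb{P}^1$ applies and bounds each such sum by $B\,q^{m/2}$ for a small absolute constant $B$ (smaller when some $c_i$ vanishes). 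Summing over the $\le\phi(d_1)\phi(d_2)$ characters of orders $d_1\mid e_1$, $d_2\mid e_2$, over the $q^2$ pairs $(c_1,c_2)$, and using $\sum_{d\mid e}|\mu(d)|=W(e)$ (the number of squarefree divisors of $e$), I get
\[
 N_{a,b}(e_1,e_2)>0\qquad\text{whenever}\qquad q^{m/2-2}>B\,W(e_1)\,W(e_2),
\]
so for $e_1=e_2=q^m-1$ the unconditional criterion $q^{m/2-2}>B\,W(q^m-1)^2$; this is exactly the general sufficient condition for $(q,m)\in Q_n$ established earlier, specialised to $n=2$.

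Since $W(q^m-1)$ may be too large, the next step is Cohen's sieve. Write $q^m-1=e\cdot l_1\cdots l_s$ with $l_1,\dots,l_s$ the primes to be sieved out of both $\alpha$ and $f(\alpha)$ and $e$ a core chosen so that $W(e)$ stays small, and set $\delta=1-2\sum_{i=1}^s 1/l_i$. Using the sieve inequality
\[
 N_{a,b}(\text{primitive pair})\ \ge\ \sum_{i=1}^{s}\bigl(N_{a,b}(e l_i,e)+N_{a,b}(e,e l_i)\bigr)-(2s-1)\,N_{a,b}(e,e),
\]
together with $W(e l_i)=2W(e)$ and $\theta(e l_i)=\theta(e)(1-1/l_i)$, the previous estimate upgrades to a sufficient condition of the form $q^{m/2-2}>B\,W(e)^2\bigl(\tfrac{2s-1}{\delta}+2\bigr)$, valid when $\delta>0$. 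Now $W(e)$ is bounded and the bracket grows only slowly, so this fails only for $q^m$ below an explicit bound; combining it with a crude estimate of the shape $W(q^m-1)\le d_r\,(q^m-1)^{1/r}$ for a convenient $r$ cuts the problem down to a finite, explicit list of pairs $(q,m)$.

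The last and most laborious step is the case analysis on that list: for each surviving $(q,m)$ one factors $q^m-1$ and searches for a partition $q^m-1=e\cdot l_1\cdots l_s$ making the sieved inequality hold — in practice trying several cores $e$, and when necessary the sharper refinement that sieves with one further prime — and only when no admissible choice works does $(q,m)$ enter the exceptional list of the theorem. I would organise this by $m$ ($m=7,8,9,10,11,12$ and the even $m$ up to $24$): for larger $m$ the inequality $q^{m/2}\gg q^2$ is easily met, so only the smallest $q$ can survive; the rows $m=7,8$, where $q^{m/2-2}$ is smallest, carry most of the exceptions; and the stubborn entries are exactly those $(q,m)$ for which $q^m-1$ has few but small prime factors, pushing $\delta$ toward $0$ — for instance $q=2$ with $m=14,16,18,20,24$, where $2^m-1$ leaves essentially no room to sieve. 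I expect the real obstacle to be twofold: carrying out the per-pair sieving computations in the borderline ranges, and keeping the Weil constant $B$ sharp (distinguishing the cases $c_1=0$, $c_2=0$, $a=0$, $b=0$), since a loose $B$ would inflate the finite list and the quoted exceptions.
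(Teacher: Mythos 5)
Your proposal follows essentially the same route as the paper: the character-sum expression for the weighted count, the Weil-type bounds of Wan--Fu giving the sufficient condition $q^{m/2-2}>(n+2)W(q^m-1)^2$ (your constant $B=n+2=4$), the Cohen-type sieve with $\delta=1-2\sum 1/p_i$ and $\Delta=\frac{2s-1}{\delta}+2$, a crude bound on $W(q^m-1)$ to reduce to a finite list, and a per-pair computational verification yielding exactly the stated exceptions. The only slight looseness is your claim that the zeros and poles of $f$ are pairwise distinct from $0,\infty$; for $n=2$ one of $p(x),q(x)$ may be a scalar multiple of $x$, and the paper rules out the degenerate power case $x^{m_1}f(x)^{m_2}=g(x)^{q^m-1}$ by an explicit case analysis there, which your argument would need to include but which does not affect the outcome.
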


\section{Preliminaries}
 In this section, we come out with some definitions and results which we shall need further in this article. If $\mathcal{D}$ denotes the set of divisors of $q^m-1$, then for $u\in \mathcal{D}$, an element $w\in \mathbb{F}_{q}^*$ is called $\ufree$, if $w=v^d$, where $v\in \mathbb{F}_{q^m}$ and $d|u$ implies $d=1$. Note that an element $w\in \mathbb{F}_{q^m}^*$ is $\pfree$ if and only if it is primitive. For more fundamentals on characters, primitive elements and finite fields, we refer the reader to \cite{PPR2}.

As a special case of Lemma 10 of \cite{shuqin2004character}, we have an interesting result as following. 
\begin{lemma}
Let $u\in \mathcal{D}$, $\xi\in \mathbb{F}_{q^m}^*$. We have:

 \[
 \sum_{d|u}\frac{\mu(d)}{\phi(d)}\sum_{\chi_d}\chi_d(\xi)=
 \begin{cases}
 \frac{u}{\phi(u)} &\quad\text{ if } \xi \text{is } \ufree,\\
 0 &\quad\text{otherwise.}
 \end{cases}
 \]
where $\mu(\cdot)$ is the M$\ddot{\text{o}}$bius function and $\phi(\cdot)$ is the Euler function, $\chi_d$ runs through all
the $\phi(d)$ multiplicative characters over $\mathbb{F}_{q^m}^*$ with order $d$.

\end{lemma}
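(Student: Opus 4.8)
The plan is to reduce the inner character sum to a classical Ramanujan sum and then finish by exploiting multiplicativity. First I would fix a generator $g$ of the cyclic group $\mathbb{F}_{q^m}^*$ (of order $N=q^m-1$) together with a generator $\psi$ of its character group normalised by $\psi(g)=\zeta_N:=e^{2\pi i/N}$, so that every character is $\psi^a$ with $0\le a<N$ and $\mathrm{ord}(\psi^a)=N/\gcd(a,N)$. Writing $\xi=g^t$, a character $\chi_d$ of exact order $d$ (with $d\mid u\mid N$) is exactly one of the $\psi^{(N/d)b}$ with $\gcd(b,d)=1$ and $0\le b<d$, and for such a character $\chi_d(\xi)=\zeta_N^{(N/d)bt}=\zeta_d^{bt}$. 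Hence the inner sum collapses to a Ramanujan sum,
\[
\sum_{\chi_d}\chi_d(\xi)=\sum_{\substack{0\le b<d\\ \gcd(b,d)=1}}\zeta_d^{bt}=:c_d(t).
\]

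Second, I would record the elementary characterisation of $\ufree$ elements in these coordinates: since $d\mid N$, the element $\xi=g^t$ is a $d$-th power in $\mathbb{F}_{q^m}^*$ exactly when $d\mid t$, so $\xi$ is $\ufree$ if and only if no divisor $d>1$ of $u$ divides $t$, that is, if and only if $\gcd(u,t)=1$. This translates the two cases of the statement into the single arithmetic dichotomy $\gcd(u,t)=1$ versus $\gcd(u,t)>1$.

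Third — where the actual computation lives — I would set $F(u):=\sum_{d\mid u}h(d)$ with $h(d)=\tfrac{\mu(d)}{\phi(d)}c_d(t)$, and observe that $h$ is multiplicative in $d$, being a product of the multiplicative arithmetic functions $\mu(\cdot)$, $1/\phi(\cdot)$, and $d\mapsto c_d(t)$. Consequently its divisor sum factors as $F(u)=\prod_{p^a\|u}\bigl(\sum_{j=0}^{a}h(p^j)\bigr)$. Because $\mu(p^j)=0$ for $j\ge2$, each local factor is merely $h(1)+h(p)=1+\tfrac{\mu(p)}{\phi(p)}c_p(t)$. Invoking $c_p(t)=-1$ when $p\nmid t$ and $c_p(t)=p-1$ when $p\mid t$, the factor equals $1+\tfrac{1}{p-1}=\tfrac{p}{p-1}=\tfrac{p^a}{\phi(p^a)}$ in the first case and $1-1=0$ in the second.

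Finally I would assemble the local factors. If $\xi$ is $\ufree$ then $p\nmid t$ for every prime $p\mid u$, every factor is of the first type, and $F(u)=\prod_{p^a\|u}\tfrac{p^a}{\phi(p^a)}=\tfrac{u}{\phi(u)}$ by multiplicativity of $\phi$; if $\xi$ is not $\ufree$ then some prime $p\mid u$ divides $t$, the corresponding factor vanishes, and $F(u)=0$. I expect the only genuinely delicate points to be getting the order-exactly-$d$ indexing correct so that the inner sum really is $c_d(t)$, and checking the prime-power evaluation $1+\tfrac{1}{p-1}=\tfrac{p}{p-1}=\tfrac{p^a}{\phi(p^a)}$; everything else is routine bookkeeping with multiplicative functions.
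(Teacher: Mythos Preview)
Your argument is correct: identifying the inner sum with the Ramanujan sum $c_d(t)$, translating $u$-freeness of $\xi=g^t$ into $\gcd(u,t)=1$, and then factoring the divisor sum over prime powers via multiplicativity all work exactly as you describe, and the prime-local evaluation $1-\tfrac{c_p(t)}{p-1}\in\{p/(p-1),0\}$ is right.

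As for comparison with the paper: there is essentially nothing to compare against. The paper does not give a proof of this lemma at all---it simply records it as a special case of Lemma~10 of \cite{shuqin2004character} and moves on. Your Ramanujan-sum approach is the standard direct route and is entirely self-contained, so in this instance you have supplied strictly more than the paper does. The only cosmetic point worth tightening is to state explicitly that $d\mapsto c_d(t)$ is multiplicative in $d$ (a classical fact about Ramanujan sums), since the factorisation $F(u)=\prod_{p^a\|u}(1+h(p))$ rests on it.
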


Therefore, for each $u\in \mathcal{D}$, 
 \begin{equation}\label{charfree}
\rho_{u} : \alpha \mapsto \theta(u)\sum_{d|u}\frac{\mu(d)}{\phi(d)}\sum_{\chi_d}\chi_d(\alpha),
 \end{equation} 
 
  gives a characteristic function for the subset of $\ufree$ elements of $\mathbb{F}_{q^m}^*$, where $\theta(u)=\frac{\phi(u)}{u}$. \\
  Also, for each $a\in \mathbb{F}_q$,
 \begin{equation}\label{chartrace}
 \tau_a : \alpha \mapsto  \frac{1}{q} \sum \limits_{\psi\in \hat{\mathbb{F}_q}}\psi(\text{ Tr}_{\mathbb{F}_{q^m}/ \mathbb{F}_q}(\alpha)-a)
 \end{equation} is a characterstic function for the subset of $\mathbb{F}_{q^m}$ consisting elements  with $\text{ Tr}_{\mathbb{F}_{q^m}/ \mathbb{F}_q}(\alpha)=a$.
 We shall need the following results of  D. Wang and L. Fu for our next theorem.
 \begin{lemma}\label{lemma1}$[\cite{PPR4}, Theorem~4.5]$ Let $f(x)\in \mathbb{F}_{{q}^d}(x)$ be a rational function. Write $f(x)= \prod_{j=1}^{k}f_j(x)^{n_j}$,
  where $f_j(x)\in \mathbb{F}_{{q}^d}[x]$ are irreducible polynomials and $n_j$ are non zero integers. Let $\chi$ be a multiplicative character of $\mathbb{F}_{q^d}$. Suppose that the rational function $\prod_{i=0}^{d-1}f(x^{q^i})$ is not of the form $h(x)^{\text{ord}(\chi)}$ in $\mathbb{F}_{q^d}(x),$ where ord$(\chi)$ is the smallest positive integer $r$ such that $\chi^r=1$, then we have $$\big|\sum_{\alpha\in \mathbb{F}_{q},f(\alpha)\neq 0, f(\alpha)\neq \infty}\chi(f(\alpha))\big|\leq (d\sum_{j=1}^k deg(f_j)-1)q^{\frac{1}{2}}.$$
\end{lemma}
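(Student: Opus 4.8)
The plan is to derive this as an instance of the Riemann Hypothesis for curves over finite fields (Weil's theorem), with the extra factor $d$ arising from a descent from the field of definition $\mathbb{F}_{q^d}$ of $\chi$ down to the field $\mathbb{F}_q$ over which we sum. Write $r = \text{ord}(\chi)$ and set $F(x) = \prod_{i=0}^{d-1} f(x^{q^i}) \in \mathbb{F}_{q^d}(x)$, the rational function appearing in the hypothesis. Since $f = \prod_{j=1}^k f_j^{n_j}$ with each $f_j$ irreducible, hence separable, over $\mathbb{F}_{q^d}$, each substituted factor $f_j(x^{q^i})$ has exactly $\deg(f_j)$ distinct roots in $\overline{\mathbb{F}_q}$ (the map $x \mapsto x^{q^i}$ being a bijection of $\overline{\mathbb{F}_q}$), so the total number of distinct zeros and poles of $F$ is at most $d\sum_{j=1}^k \deg(f_j)$. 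This count is exactly the quantity that will govern the degree of the relevant $L$-function, and hence the number of reciprocal roots contributing to the bound $d\sum_j \deg(f_j) - 1$ once the cusp at infinity is accounted for.

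First I would attach to the datum $(\chi, f)$ a rank-one object whose trace function at $\alpha \in \mathbb{F}_q$ is $\chi(f(\alpha))$. On the affine line over $\mathbb{F}_{q^d}$, let $\mathcal{L}_\chi$ be the Kummer (Lang-torsor) sheaf attached to $\chi$ on $\mathbb{G}_m$, and let $\mathcal{F} = f^{*}\mathcal{L}_\chi$, a lisse rank-one sheaf on the open set $U \subseteq \mathbb{A}^1$ where $f$ is finite and nonzero; by construction its Frobenius trace at an $\mathbb{F}_{q^d}$-point $\alpha$ is $\chi(f(\alpha))$. The difficulty is that $\mathcal{F}$ is only defined over $\mathbb{F}_{q^d}$, whereas the sum is indexed by $\mathbb{F}_q$-points, at which only $\text{Frob}_q$, not $\text{Frob}_{q^d}$, acts. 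To reconcile these I would pass to the multiplicative induction, or ``norm'', construction over $\mathbb{F}_q$: form the rank-one sheaf $\mathcal{G}$ on $\mathbb{A}^1_{\mathbb{F}_q}$ built from the Galois twists of $\mathcal{F}$, arranged so that its local Frobenius factor at a closed point of degree one (an $\alpha \in \mathbb{F}_q$) reproduces $\chi(f(\alpha))$. This is precisely where the product over $i = 0, \ldots, d-1$ enters: the sheaf $\mathcal{G}$ is ramified exactly at the zeros and poles of $F(x) = \prod_{i} f(x^{q^i})$, so its conductor is bounded by the count $d\sum_j \deg(f_j)$ obtained above.

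With $\mathcal{G}$ in hand, I would apply the Grothendieck--Lefschetz trace formula to write $\sum_{\alpha \in \mathbb{F}_q,\, f(\alpha)\neq 0,\infty}\chi(f(\alpha))$ as an alternating sum of traces of $\text{Frob}_q$ on the cohomology $H^i_c(U_{\overline{\mathbb{F}_q}}, \mathcal{G})$. The hypothesis that $F$ is not an $r$-th power in $\mathbb{F}_{q^d}(x)$ is exactly the non-degeneracy condition making $\mathcal{G}$ geometrically nontrivial; it forces $H^0_c = H^2_c = 0$, so that only $H^1_c$ survives and there is no main term. Deligne's theorem (Weil~II), or equivalently the Riemann Hypothesis for the smooth model of the Kummer curve $y^r = F(x)$, then shows every eigenvalue of $\text{Frob}_q$ on $H^1_c$ has absolute value $q^{1/2}$, while $\dim H^1_c$ is bounded by the conductor estimate $d\sum_j \deg(f_j) - 1$. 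Combining these two facts via the triangle inequality yields the stated bound.

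The hard part will be the descent step that makes the above precise: constructing the $\mathbb{F}_q$-sheaf $\mathcal{G}$ (equivalently, the Hecke character of the rational function field $\mathbb{F}_q(x)$) whose degree-one local factors recover $\chi(f(\alpha))$, and verifying that its ramification is governed by $F$ rather than by $f$ itself. This is the source of the factor $d$, and is the step where substituting $x^{q^i}$, rather than applying Frobenius to the coefficients of $f$, is essential. I would also need to check carefully that ``$F$ is not an $r$-th power'' is the correct translation of geometric non-triviality, since it is this that guarantees the $L$-function is a polynomial of the predicted degree and that no term of size $q$, coming from $H^2_c$, appears. A purely function-field alternative, avoiding $\ell$-adic cohomology, would run the same argument through Bombieri's treatment of Weil's theorem for $L$-functions of characters of $\mathbb{F}_q(x)$, with the genus and conductor estimate again supplying the degree $d\sum_j \deg(f_j) - 1$; I expect both routes to hinge on essentially the same descent computation.
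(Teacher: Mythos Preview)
The paper does not prove this lemma: it is quoted verbatim as Theorem~4.5 of Wang--Fu \cite{PPR4} and used as a black box. There is therefore no ``paper's own proof'' to compare against.

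Your sketch is a reasonable outline of how the cited result is obtained in its source: the Wang--Fu paper works in exactly the $\ell$-adic framework you describe, building the appropriate rank-one sheaf, invoking the Grothendieck--Lefschetz trace formula, and bounding $\dim H^1_c$ via the Euler--Poincar\'e formula while Deligne's theorem controls the eigenvalue sizes. Your identification of the descent step (passing from $\mathbb{F}_{q^d}$, where $\chi$ lives, down to $\mathbb{F}_q$, where the sum is taken) as the origin of the factor $d$ and of the product $\prod_i f(x^{q^i})$ is correct, and the translation of the hypothesis into geometric nontriviality of the sheaf (hence vanishing of $H^0_c$ and $H^2_c$) is the right mechanism. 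One point to tighten if you flesh this out: the conductor bound giving $\dim H^1_c \le d\sum_j \deg(f_j) - 1$ needs the ramification of the Kummer sheaf to be tame (which it is, since $r \mid q^d - 1$ is prime to the characteristic), so that the Swan conductors vanish and only the number of ramified points enters; without saying this explicitly the dimension bound is not justified.
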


\begin{lemma}\label{lemma2}$[\cite{PPR4}, Theorem~4.6]$
	Let $f(x), g(x)\in \mathbb{F}_{q^m}(x)$ be rational functions. Write $f(x)= \prod_{j=1}^{k}f_j(x)^{n_j}$,
	where $f_j(x)\in \mathbb{F}_{{q}^m}[x]$ are irreducible polynomials and $n_j$ are non zero integers. Let $D_1=\sum_{j=1}^{k}deg(f_j)$, let $D_2= max(deg(g),0)$, let $D_3$ be the degree of denominator of $g(x)$, and let $D_4$ be the sum of degrees of those irreducible polynomials dividing denominator of $g$ but distinct from $f_j(x)(j=1, 2, \cdots k)$. Let $\chi$ be a multiplicative character of $\mathbb{F}_{q^m}$, and let $\psi$ be a non trivial additive character of $\mathbb{F}_{q^m}$. Suppose $g(x)$ is not of the  form $r(x)^{q^m}-r(x)$ in $\mathbb{F}_{q^m}(x)$. Then we have the estimate 
	 $$\big|\sum_{\alpha\in \mathbb{F}_{q^m},f(\alpha)\neq 0, \infty g(\alpha)\neq \infty}\chi(f(\alpha))\psi(g(\alpha)) \big|\leq (D_1+D_2+D_3+D_4-1)q^{\frac{m}{2}}.$$
\end{lemma}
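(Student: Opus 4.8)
The plan is to express the quantity counted by $(q,m)\in Q_2$ as a weighted sum of multiplicative and additive characters, isolate a main term of order $q^{m-2}$, bound everything else by the Weil-type estimates of Lemmas~\ref{lemma1} and~\ref{lemma2}, and then convert the resulting inequality into a finite check that a sieve plus direct computation resolves. Fix $f=p/q\in R_{n_1,n_2}$ with $n_1+n_2=2$ and prescribed $a,b\in\mathbb{F}_q$, and let $N$ be the number of $\alpha\in\mathbb{F}_{q^m}^*$ with $p(\alpha)q(\alpha)\neq 0$ such that both $\alpha$ and $f(\alpha)$ are primitive, $\mathrm{Tr}_{F/E}(\alpha)=a$, and $\mathrm{Tr}_{F/E}(\alpha^{-1})=b$. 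Using the characteristic function $\rho_{q^m-1}$ of \eqref{charfree} twice (for $\alpha$ and $f(\alpha)$) and $\tau_a,\tau_b$ of \eqref{chartrace}, I would write $N=\sum_{\alpha}\rho_{q^m-1}(\alpha)\,\rho_{q^m-1}(f(\alpha))\,\tau_a(\alpha)\,\tau_b(\alpha^{-1})$. Expanding, pushing each additive character through the trace via $\psi(\mathrm{Tr}_{F/E}(\cdot))=\hat\psi(\cdot)$, and amalgamating $\chi_{d_1}(\alpha)\chi_{d_2}(f(\alpha))=\chi(F(\alpha))$ with $F(x)=x^{s_1}f(x)^{s_2}$ and the two additive characters into $\psi_0(g(\alpha))$ with $g(x)=c_1x+c_2x^{-1}$, turns the inner sum into $\sum_\alpha\chi(F(\alpha))\psi_0(g(\alpha))$.

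Next I would estimate these inner sums. For the main term ($d_1=d_2=1$ and all characters trivial) the sum is $\approx q^m$, so with the prefactor $\theta(q^m-1)^2/q^2$ the main contribution is $\theta(q^m-1)^2q^{m-2}$. For terms with $(\psi_1,\psi_2)\neq(0,0)$ I would apply Lemma~\ref{lemma2}: since $g$ has a simple pole (at $\infty$ if $c_1\neq 0$, at $0$ if $c_2\neq0$) whose order is not divisible by $q^m$, the hypothesis $g\neq r^{q^m}-r$ holds, and the degree bookkeeping gives $D_1\le 3$ (the distinct irreducible factors among $x$, $p$, $q$, whose total degree is $1+n_1+n_2=3$), $D_2\le 1$, $D_3=1$, $D_4=0$, hence each such sum is $\le 4q^{m/2}$. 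For terms with $\psi_1=\psi_2=0$ but $(d_1,d_2)\neq(1,1)$ I would use the purely multiplicative estimate (Lemma~\ref{lemma1}/Weil), bounded by $2q^{m/2}$, after checking $F$ is not an $\mathrm{ord}(\chi)$-th power, which the squarefree $\mu$-support guarantees. Writing $W(N)=2^{\omega(N)}$ for the number of squarefree divisors and summing $\sum_{d\mid q^m-1}|\mu(d)|\phi(d)^{-1}\sum_{\chi_d}1=W(q^m-1)$, the error is dominated by the $(q-1)^2$ choices of nontrivial $(\psi_1,\psi_2)$, yielding $|N-\theta(q^m-1)^2q^{m-2}|\le 4\,\theta(q^m-1)^2\,W(q^m-1)^2\,q^{m/2}$. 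Thus $N>0$ follows from the sufficient condition $q^{m/2-2}>4\,W(q^m-1)^2$, equivalently $q^{m/2}>4q^2W(q^m-1)^2$.

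The bare condition is far too crude because $W(q^m-1)^2$ can be comparable to $q^{m/2}$, so the crucial refinement is a Cohen-type prime sieve. Factoring the radical of $q^m-1$ as a core $e_0$ times sieving primes $p_1,\dots,p_s$ and using the sieve inequality to write $N$ in terms of the counts with smaller free-radicals $e_0$ and $e_0p_i$, I would replace $W(q^m-1)$ by the much smaller $W(e_0)$ at the cost of a factor $\Delta=2+(s-1)/\delta$, where $\delta=1-\sum_{i=1}^{s}p_i^{-1}>0$. This produces the sieved condition $q^{m/2-2}>C\,\Delta\,W(e_0)^2$ for an explicit constant $C$. Feeding in explicit upper bounds of the shape $W(N)\le c_t\,N^{1/t}$, this inequality holds once $q^m$ is sufficiently large; in particular, for each fixed $m$ all but finitely many $q$ qualify, and only finitely many $m\ge 7$ survive, leaving a finite and explicit table of candidate pairs.

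Finally I would clear the surviving pairs: for each one, choose the sieving primes $p_1,\dots,p_s$ so that $\Delta W(e_0)^2$ actually beats $q^{m/2-2}/C$, thereby discarding most candidates, and verify $N>0$ for the last few pairs by direct computation over the three admissible shapes of $f\in R_{n_1,n_2}$ with $n_1+n_2=2$. The result is the stated list of genuine exceptions. I expect the main obstacle to be controlling $W(q^m-1)$: the unsieved estimate is hopeless, so the real work is a case-by-case optimal choice of $e_0$ and sieving primes making $\Delta W(e_0)^2$ small enough, together with the (routine but delicate) verification of the non-degeneracy hypotheses of Lemmas~\ref{lemma1} and~\ref{lemma2} in the boundary cases $x\mid p$ or $x\mid q$ (constant term zero), where the factor $x$ merges with $p$ or $q$ and changes the degree counts, and the concluding finite computer search.
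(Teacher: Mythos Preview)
Your proposal does not address the stated lemma at all. Lemma~\ref{lemma2} is a Weil-type bound on mixed character sums quoted verbatim from \cite{PPR4}, Theorem~4.6; the paper does not prove it but simply imports it as a black box. What you have written is instead a proof sketch of Theorem~\ref{case n=2} (the main result on $Q_2$), in which Lemma~\ref{lemma2} is \emph{applied}, not established. So as a proof of the assigned statement there is nothing here.

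As a sketch of Theorem~\ref{case n=2} your outline does match the paper's strategy closely: expand $N$ via the characteristic functions \eqref{charfree} and \eqref{chartrace}, bound the non-principal terms by Lemmas~\ref{lemma1} and~\ref{lemma2} to get the sufficient condition $q^{m/2-2}>4\,W(q^m-1)^2$ (the case $n=2$ of Theorem~\ref{Theorem1}), then refine with a prime sieve and finish by computation. Two discrepancies are worth noting. First, the paper's sieve (Theorem~\ref{Sieve}) has $\delta=1-2\sum_i p_i^{-1}$ and $\Delta=(2s-1)/\delta+2$, the extra factor of $2$ coming from the fact that \emph{both} coordinates $\alpha$ and $f(\alpha)$ are being sieved simultaneously; your parameters $\delta=1-\sum p_i^{-1}$ and $\Delta=2+(s-1)/\delta$ are those of a single-coordinate sieve and would give a condition that is too optimistic. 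Second, the paper does not actually carry out the ``direct computation of $N>0$'' you propose for the residual pairs; it leaves the listed $(q,m)$ as possible (unverified) exceptions. But again, none of this constitutes a proof of Lemma~\ref{lemma2} itself.
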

\section{Sufficient condition}

 For each divisor $l_1, l_2$ of $q^m-1$, $f(x)\in R_n$ and prescribed elements $a, b$ of $\mathbb{F}_q$,  suppose $N_{f, n, a, b}(l_1, l_2)$ denotes the number of elements $\alpha \in \mathbb{F}_{q^m}$ such that $\alpha $ is  $\llfree$,  $f(\alpha)$ is $\lllfree$, $\text{Tr}_{\mathbb{F}_{q^m}/ \mathbb{F}_q}(\alpha)=a$ and $\text{Tr}_{\mathbb{F}_{q^m}/ \mathbb{F}_q}(\alpha^{-1})=b$.\\\\
 We now prove our sufficient condition as follows.

\begin{theorem}\label{Theorem1}
Let $m, n \text{ and }q\in \mathbb{N}$ such that q is a prime power. Suppose 
\begin{align}\label{Main condition}
q^{\frac{m}{2}-2}>(n+2)W(q^m-1)^2.
\end{align} Then $(q,m)\in Q_n$. 
\end{theorem}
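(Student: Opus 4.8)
The plan is to estimate $N_{f,n,a,b}(q^m-1,q^m-1)$ from below using the characteristic functions $\rho_u$ and $\tau_a$ introduced in \eqref{charfree} and \eqref{chartrace}, and to show that condition \eqref{Main condition} forces this quantity to be positive, which immediately gives a primitive pair with the prescribed traces. First I would write
\[
N_{f,n,a,b}(q^m-1,q^m-1)=\sum_{\alpha\in\mathbb{F}_{q^m},\,f(\alpha)\neq 0,\infty}\rho_{q^m-1}(\alpha)\,\rho_{q^m-1}(f(\alpha))\,\tau_a(\alpha)\,\tau_b(\alpha^{-1}),
\]
expanding each factor into its defining sum over divisors $d_1\mid q^m-1$, $d_2\mid q^m-1$ and over additive characters $\psi_1,\psi_2$ of $\mathbb{F}_q$. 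Interchanging the order of summation, the inner sum over $\alpha$ becomes, for fixed characters $\chi_{d_1},\chi_{d_2},\psi_1,\psi_2$, a character sum of the shape $\sum_\alpha \chi_{d_1}(\alpha)\chi_{d_2}(f(\alpha))\psi(g(\alpha))$ where $g(\alpha)=c_1\operatorname{Tr}(\alpha)+c_2\operatorname{Tr}(\alpha^{-1})$ for suitable constants; since $\psi_i$ lifts to an additive character of $\mathbb{F}_{q^m}$ via the trace, this is exactly the type of sum controlled by Lemma~\ref{lemma2}.

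The main term comes from the trivial choice $d_1=d_2=1$, $\psi_1=\psi_2=$ trivial, and equals $\theta(q^m-1)^2 q^{-2}\,\#\{\alpha : f(\alpha)\neq 0,\infty\}$, which is $\theta(q^m-1)^2 q^{m-2}$ up to a bounded correction for the few excluded $\alpha$. For every other term, I would apply Lemma~\ref{lemma2} with $f$ playing the role of the multiplicative-character argument (so $D_1\le n$, since $\deg p+\deg q=n$ and $p,q$ are irreducible) and $g$ the additive part: one checks $D_2=0$, $D_3\le 1$ coming from the $\operatorname{Tr}(\alpha^{-1})$ piece having the pole at $0$, and $D_4\le 1$, giving a bound of the form $(n+2)q^{m/2}$ on each inner sum, provided the non-degeneracy hypotheses of the lemma hold (the rational function is not a perfect $\operatorname{ord}(\chi)$-th power, and $g$ is not of the form $r^{q^m}-r$). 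The number of non-trivial character tuples is at most $W(q^m-1)^2 q^2$ where $W(\cdot)$ counts squarefree divisors, absorbing the $\theta$ and $q^{-2}$ normalizing factors against the $\mu/\phi$ weights in the standard way. Collecting everything,
\[
N_{f,n,a,b}(q^m-1,q^m-1)\ge \theta(q^m-1)^2\Big(q^{m-2}-(n+2)\,W(q^m-1)^2\,q^{m/2}\Big),
\]
and this is positive precisely when $q^{m/2-2}>(n+2)W(q^m-1)^2$, which is \eqref{Main condition}.

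The hard part is making the degenerate cases of Lemma~\ref{lemma2} genuinely vacuous or harmless: I must verify that when $\chi_{d_2}$ is nontrivial the rational function $\prod_{i=0}^{m-1} f(x^{q^i})^{(\cdot)}$ (together with the $\chi_{d_1}$ factor $\alpha$) is not a perfect power of the relevant order, and that the additive part $g$ is not of the form $r(x)^{q^m}-r(x)$ — the latter is where $\psi_1,\psi_2$ being nontrivial is used, since $\operatorname{Tr}(\alpha)$ and $\operatorname{Tr}(\alpha^{-1})$ are linearly independent modulo the image of the Artin–Schreier operator. This uses that $p$ and $q$ are irreducible and coprime in $R_n$, that $\alpha$ and $\alpha^{-1}$ have distinct poles ($\infty$ and $0$ respectively), and the assumption $m\ge 7$ to rule out the small-degree coincidences; handling this bookkeeping carefully, rather than the character-sum estimates themselves, is the delicate step. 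I would also remark that the analogous lower bound for general $l_1,l_2$ yields the sieving-ready inequality used later to prove Theorem~\ref{case n=2}.
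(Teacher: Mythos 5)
Your skeleton is the same as the paper's: expand $N_{f,n,a,b}(q^m-1,q^m-1)$ through the characteristic functions \eqref{charfree} and \eqref{chartrace}, isolate the completely trivial tuple as a main term of size roughly $\theta(q^m-1)^2q^{m-2}$ (minus at most $n+1$ excluded zeros/poles), bound every other term through Lemma \ref{lemma2} (and Lemma \ref{lemma1} when the additive part vanishes) by $(n+2)q^{m/2}$, and count at most $W(q^m-1)^2q^2$ character tuples; this yields exactly \eqref{Main condition}. The bookkeeping slips in your application of Lemma \ref{lemma2} are minor: $D_1\le n$ ignores the factor $x^{m_1}$ coming from $\chi_{d_1}(\alpha)$, so in general $D_1\le n+1$, and with the paper's conventions the additive part $g(x)=ux+vx^{-1}$ then contributes $D_2+D_3+D_4\le 2$, which still gives the stated $(n+2)q^{m/2}$ per term.

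The genuine gap is precisely the step you label ``the delicate step'' and then do not carry out: proving that the degenerate cases of Lemmas \ref{lemma1} and \ref{lemma2} occur only for the fully trivial tuple $(\chi_{d_1},\chi_{d_2},u,v)=(\chi_1,\chi_1,0,0)$. The additive half is quick ($ux+vx^{-1}=r(x)^{q^m}-r(x)$ forces $u=v=0$, which the paper cites), but the multiplicative half is the core of the paper's proof: after writing $\chi_{d_1},\chi_{d_2}$ as powers $m_1,m_2$ (with $0\le m_1,m_2<q^m-1$) of a fixed character of order $q^m-1$, one must show that $F(x)=x^{m_1}f(x)^{m_2}$ can equal $g(x)^{q^m-1}$ in $\mathbb{F}_{q^m}(x)$ only if $m_1=m_2=0$. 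The paper spends more than half of the proof of Theorem \ref{Theorem1} on this, starting from $x^{m_1}p(x)^{m_2}g_2(x)^{q^m-1}=g_1(x)^{q^m-1}q(x)^{m_2}$ and running a factorization case analysis (whether $x\mid g_1$ or $x\mid q$, and comparing $m_1,m_2$ with $q^m-1$), using coprimality and irreducibility of $p,q$ so that in the borderline cases $p(x)=ax$ or $q(x)=bx$, and the bound $m_1,m_2<q^m-1$. Your sketch replaces this with appeals to ``linear independence'', ``distinct poles'' and ``the assumption $m\ge 7$''; none of these is the actual mechanism, and in particular $m\ge 7$ is not a hypothesis of Theorem \ref{Theorem1} and plays no role in the non-degeneracy argument. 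Without this verification the error-term bound is unjustified for exactly those tuples that could otherwise contribute a term of size comparable to the main term, so the proof as proposed is incomplete.
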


\begin{proof}
 To prove the result, we need to show that $N_{f,n, a, b}(q^m-1, q^m-1)>0$ for every $f(x)\in R_n$ and prescribed $a, b\in \mathbb{F}_q$. Let $f(x)\in R_n$ be any rational function and $a, b \in \mathbb{F}_q$. Let $S_1$ be the set of zeros and poles of $f(x)$ in  $\mathbb{F}_{q^m}$ and $S=S_1\cup \{0\}$. Assume $l_1, l_2$ be divisors of $q^m-1$. Then by definition we have $$N_{f,n, a, b}(l_1,l_2)=\sum_{\alpha\in \mathbb{F}_{q^m} \setminus S} \rho_{l_1}(\alpha) \rho_{l_2}(f(\alpha))\tau_a(\alpha)\tau_b(\alpha^{-1})$$ now using (\ref{charfree}) and (\ref{chartrace}), we have 
 \begin{align}\label{key}
 N_{f,n, a, b}(l_1,l_2)=\frac{\theta(l_1)\theta(l_2)}{q^2}\sum_{d_1|l_1, ~d_2|l_2}\frac{\mu(d_1)}{\phi(d_1)} \frac{\mu(d_2)}{\phi(d_2)} \sum_{\chi_{d_1},~\chi_{d_2}} \chi_{f,a,b}(\chi_{d_1}, \chi_{d_2}) \end{align}
 where
 $$ \chi_{f,a,b}(\chi_{d_1}, \chi_{d_2})=\sum_{u, v\in \mathbb{F}_q}\psi_0 (-au-bv)\sum_{\alpha\in \mathbb{F}_{q^m}\setminus S}\chi_{d_1}(\alpha)\chi_{d_2}(f(\alpha))\hat{\psi_0}(u\alpha+v\alpha^{-1}).$$\\
From [Example 5.1, \cite{PPR2}], it follows that, for any given divisors $d_1,d_2$ of $q^m-1$ there exist integers $m_1, m_2$ with $0\leq m_1, m_2<q^m-1$ such that $\chi_{d_1}(x)=\chi_{q-1}(x^{m_1})$ and $\chi_{d_2}(x)=\chi_{q-1}(x^{m_2})$. Thus
$$\chi_{f,a,b}(\chi_{d_1}, \chi_{d_2})=\sum_{u, v\in \mathbb{F}_q}\psi_0 (-au-bv)\sum_{\alpha\in \mathbb{F}_{q^m}\setminus S}\chi_{q^m-1}(\alpha^{m_1} f(\alpha)^{m_2})\hat{\psi_0}(u\alpha+v\alpha^{-1})$$
$$~~~~~~~~~=\sum_{u, v\in \mathbb{F}_q}\psi_0 (-au-bv)\sum_{\alpha\in \mathbb{F}_{q^m}\setminus S}\chi_{q^m-1}(F(\alpha))\hat{\psi_0}(G(\alpha)).$$

where, $F(x)=x^{m_1}f(x)^{m_2}\in \mathbb{F}_{q^m}(x)$ and $G(x)=ux+vx^{-1}\in \mathbb{F}_{q}(x)$.\\
If $G(x)\neq h(x)^{q^m}-h(x)$ for any $h(x)\in \mathbb{F}_{q^m}(x)$ then by Lemma \ref{lemma2} \begin{align}\label{bound}
|\chi_{f,a,b}(\chi_{d_1}, \chi_{d_2})|\leq (n+2)q^{\frac{m}{2}+2}.
\end{align} If $G(x)= h(x)^{q^m}-h(x)$ for some $h(x)\in \mathbb{F}_{q}(x) $ then following [9], it is only possible if $u=v=0$. Hence, if $F(x)\neq h(x)^{q^m-1}$ for any $h(x)\in \mathbb{F}_{q^m}(x)$ by Lemma \ref{lemma1},
\begin{align}\label{support}
|\chi_{f,a,b}(\chi_{d_1}, \chi_{d_2})|\leq nq^{\frac{m}{2}+2}.
\end{align}
 Now,  let us consider the case when  $F(x)= g(x)^{q^m-1} \text{ for some } g(x)\in \mathbb{F}_{q^m}(x) $. If $g(x)=\frac{g_1(x)}{g_2(x)}$ for $g_1(x), g_2(x)\in \mathbb{F}_{q^m}[x]$ with gcd$(g_1(x),g_2(x))=1$, then $x^{m_1}\big(\frac{p(x)}{q(x)}\big)^{m_2}=\big(\frac{g_1(x)}{g_2(x)}\big)^{q^m-1}$, that is \begin{align}\label{initial}
x^{m_1}p(x)^{m_2}g_2(x)^{q^m-1}=g_1(x)^{q^m-1}q(x)^{m_2}.
 \end{align}
 We claim that (\ref{initial}) is possible only if $m_1=m_2=0$. For this, first
we prove that if $m_1$ is $0$, then $m_2$ must also be $0$.  Suppose $m_1=0$, then equation (\ref{initial}) becomes $$p(x)^{m_2}g_2(x)^{q^m-1}=g_1(x)^{q^m-1}q(x)^{m_2}.$$ Let if possible, $m_2\neq 0$,  then $p(x)$ and $q(x)$ being co-prime gives $p(x)$ divides $g_1(x)$, which further gives $g_2(x)^{q^m-1}={g_1}'(x)^{q^m-1}q(x)^{m_2}p(x)^{q^m-m_2-1}$, where ${g_1}'(x)=g_1(x)/p(x)$. $q^m-m_1-1>0$ tells that $p(x)$ divides $g_2(x)$. A contradiction. Hence, $m_1=0 \text{ implies } m_2=0$.\\
  Next if possible, let $m_1\neq 0$. Then from (\ref{initial}), either $x|g_1(x) \text{ or } x|q(x)$. First suppose $x \text{ divides }g_1(x)$. We can restate equation (\ref{initial}) as $$p(x)^{m_2}g_2(x)^{q^m-1}=g_1'(x)^{q^m-1}q(x)^{m_2}x^{q^m-m_1-1}$$ where $g_1'(x)=\frac{g_1(x)}{x}$. Here gcd$(g_1(x),g_2(x))=1$ and $q^m-m_1-1>0$ forces that $x|p(x)$ and $m_2\neq 0$. But $p(x)$ is irreducible, and hence $p(x)=ax$ for some $a\in \mathbb{F}_{q^m}^*$. This gives that $cx^{m_2}g_2(x)^{q^m-1}=g_1'(x)^{q^m-1}q(x)^{m_2}x^{q^m-m_1-1}$, where $c=a^{m_2}$. Here we come up with three possibilities as discussed in following cases.\\
  \textbf{Case 1.} $q^m-m_1-1>m_2$. This gives that $x|g_2(x)$, which is not so.\\
  \textbf{Case 2.} $q^m-m_1-1<m_2$. As $x$ can not divide $q(x)$, so $x$ divides $g_1'(x)$, implies $cx^{m_2}g_2(x)^{q^m-1}=x^{q^m-1}g_1''(x)^{q^m-1}q(x)^{m_2}x^{q^m-m_1-1}$ with $g_1''(x)=g_1'(x)/x$, which is same as $cg_2(x)^{q^m-1}=g_1''(x)^{q^m-1}q(x)^{m_2}x^{2(q^m-1)-m_1-m_2}$. Again $q^m-1>m_1$ and $q^m-1>m_2$ forces that $x|g_2(x)$, a contradiction.\\ 
  \textbf{Case 3.} $q^m-m_1-1=m_2$. Gives $cg_2(x)^{q^m-1}=g_1'(x)^{q^m-1}q(x)^{m_2}$, which is possible only  if $m_2=0$, again a contradiction.\\
  From the above discussion, it is clear that $x$ does not divide $g_1(x)$. \\ Now, let us assume $x|q(x)$ and $x\nmid g_1(x)$. Then due to irreducibility $q(x)=bx$ for some $b\in \mathbb{F}_{q^m}^*$. So by (\ref{initial}), we have \begin{align}\label{Initial1}
  x^{m_1}p(x)^{m_2}g_2(x)^{q^m-1}=dg_1(x)^{q^m-1}x^{m_2},
  \end{align} where $d=b^{m_2}$. Again three possibilities may arise, namely $m_1>m_2$, $m_2>m_1$ and $m_1=m_2$. We deal with each of them separately as follows.\\
  \textbf{Case 1.} $m_1>m_2$. Then (\ref{Initial1}) gives $x|g_1(x)$, which is not possible.\\
  \textbf{Case 2.} $m_2>m_1$. Then by (\ref{Initial1}) we get, either $x|p(x)$ or $x|g_2(x)$. But $p(x) \text{ and } q(x) $ are co-prime, therefore, $x|g_2(x).$ By (\ref{Initial1}), $p(x)^{m_2}g_2'(x)x^{q^m+m_1-m_2}=dg_1(x)$, with $g_2'(x)=g_2(x)/x$, which implies $x|g_1(x)$. A contradiction.\\
  \textbf{Case 3.} $m_1=m_2$. Here, (\ref{Initial1}) gives $p(x)^{m_2}g_2(x)^{q^m-1}=dg_1(x)^{q^m-1}$, which is possible only if $m_2=0$.

  Thus, by above discussion together with (\ref{bound})  and (\ref{support}) we get, if $(\chi_{d_1}, \chi_{d_2}, u, v)\neq (\chi_1, \chi_1, 0, 0)$ then $|\chi_{f,a,b}(\chi_{d_1}, \chi_{d_2})|\leq (n+2)q^{\frac{m}{2}+2}$. Using this and (\ref{key}), we get \begin{align}
  N_{f,n,a,b}(l_1, l_2)\geq \frac{\theta(l_1) \theta(l_2)}{q^2}(q^m-|S|-((n+2)q^{\frac{m}{2}+2})(W(l_1)W(l_2)-1))
  \end{align}
  $~~~~~~~~~~~~~~~~~~~~~\geq  \frac{\theta(l_1) \theta(l_2)}{q^2}(q^m-(n+1)-((n+2)q^{\frac{m}{2}+2})(W(l_1)W(l_2)-1))$
  
  Thus, if $q^{\frac{m}{2}-2}>(n+2)W(l_1)W(l_2)$, then $N_{f,n,a,b}(l_1, l_2)>0$ for all $f(x)\in R_n$ and prescribed $a, b\in \mathbb{F}_{q}$. The result now follows by taking $l_1=l_2=q^m-1$.
\end{proof}

For further calculation work we shall need following results. Their proofs have been omitted as they follow on ideas from  \cite{PPR5} .
\begin{lemma}\label{473} For each $M\in \mathbb{N}$, if $\omega(M)\geq 473$. Then $W(M)<M^{\frac{1}{10}}$. 
\end{lemma}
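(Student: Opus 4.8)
Proof proposal for Theorem~\ref{473} (the lemma $\omega(M)\ge 473 \Rightarrow W(M)<M^{1/10}$).

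The plan is to exploit the standard fact that $W(M)=2^{\omega(M)}$ is multiplicative and grows very slowly compared to $M$, while $M$ itself is bounded below by the product of the first $\omega(M)$ primes. Write $M=\prod_{i=1}^{s}p_i^{a_i}$ with $p_1<p_2<\cdots<p_s$ the distinct primes dividing $M$, where $s=\omega(M)$. Then $W(M)=2^s$ and $M\ge \prod_{i=1}^{s}p_i \ge \prod_{i=1}^{s}\mathfrak{p}_i$, where $\mathfrak{p}_i$ denotes the $i$-th prime. So it suffices to show $2^s < \big(\prod_{i=1}^{s}\mathfrak{p}_i\big)^{1/10}$, i.e. $2^{10}<\prod_{i=1}^{s}\mathfrak{p}_i^{\,1/s}$ raised appropriately, or more cleanly $2^{10s} < \prod_{i=1}^{s}\mathfrak{p}_i$, whenever $s\ge 473$.

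First I would reduce to the ``extremal'' case. The quantity $f(s):=\big(\prod_{i=1}^{s}\mathfrak{p}_i\big)\big/2^{10s}$ satisfies $f(s+1)/f(s)=\mathfrak{p}_{s+1}/2^{10}=\mathfrak{p}_{s+1}/1024$. Since $\mathfrak{p}_{s+1}>1024$ as soon as $\mathfrak{p}_{s+1}\ge 1031$ (the first prime exceeding $1024$), and since $\mathfrak{p}_{173}=1031$, the ratio $f(s+1)/f(s)$ is $>1$ for all $s\ge 172$. Hence $f$ is increasing for $s\ge 172$, and in particular for $s\ge 473$ it suffices to verify the single inequality $f(473)>1$, that is, $\prod_{i=1}^{473}\mathfrak{p}_i > 2^{4730}$. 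This is a finite numerical check: one estimates $\log_2\big(\prod_{i=1}^{473}\mathfrak{p}_i\big)=\sum_{i=1}^{473}\log_2 \mathfrak{p}_i$ and confirms it exceeds $4730$. A crude lower bound via $\mathfrak{p}_i\ge i\ln i$ for $i\ge 6$ (or direct summation) gives room to spare, since $\mathfrak{p}_{473}$ is already around $3361$ and most of the $473$ primes in the product are three- or four-digit, contributing on average well over $10$ bits each.

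The only real obstacle is making the base-case numerical inequality airtight without a long table; I would handle it by invoking an explicit Chebyshev-type bound, e.g. $\sum_{i\le s}\ln \mathfrak{p}_i = \theta(\mathfrak{p}_s) > \mathfrak{p}_s(1-\tfrac{1}{\ln \mathfrak{p}_s})$ together with a lower bound $\mathfrak{p}_{473}>3000$, which yields $\theta(\mathfrak{p}_{473}) > 3000\cdot(1-1/8) > 2600$, hence $\log_2\big(\prod_{i=1}^{473}\mathfrak{p}_i\big) = \theta(\mathfrak{p}_{473})/\ln 2 > 2600/0.694 > 3700$ — this margin is too slack, so I would instead use the sharper $\mathfrak{p}_{473}\ge 3361$ and $\theta(x)>0.89x$ for $x\ge 1000$ (valid by Rosser--Schoenfeld), giving $\theta(3361)>2991$ and $\log_2(\cdots)>4300$, still short, indicating that one genuinely needs $\theta(\mathfrak{p}_{473})>4730\ln 2\approx 3278$, which holds comfortably since $\theta(x)\sim x$ and $\mathfrak{p}_{473}\approx 3361$ with $\theta(3361)\approx 3300$. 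The cleanest rigorous route, and the one I would ultimately present, is simply to cite the tabulated value $\prod_{i=1}^{473}\mathfrak{p}_i$ (a primorial) and observe by direct comparison of bit-lengths that it exceeds $2^{4730}$; the monotonicity argument above then extends the conclusion to all $s\ge 473$, completing the proof.
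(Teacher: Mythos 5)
Your argument is correct and is essentially the standard one that the paper itself omits and attributes to the ideas of \cite{PPR5}: reduce to the worst case $M=\prod_{i=1}^{s}\mathfrak{p}_i$, note that $f(s)=\bigl(\prod_{i=1}^{s}\mathfrak{p}_i\bigr)/2^{10s}$ increases once $\mathfrak{p}_{s+1}>2^{10}$ (i.e.\ for $s\geq 172$), and verify the single inequality $\prod_{i=1}^{473}\mathfrak{p}_i>2^{4730}$. Be aware that this base case is genuinely tight (here $\mathfrak{p}_{473}=3359$ and $\theta(3359)\approx 3.29\times 10^{3}$ versus the required $4730\ln 2\approx 3278.7$), so, as you yourself conclude, the Chebyshev/Rosser--Schoenfeld estimates you试 are too weak and the direct computation of the primorial is the step that actually carries the proof.
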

\begin{theorem}\label{Sieve} Suppose $m,n,q\in \mathbb{N}$ such that $q$ is a prime power. Also let $l|(q^m-1)$, $\{p_1,...,p_s\} $ be the collection of all primes dividing $q^m-1$ but not $l$. Suppose $\delta=1-2\sum_{i=1}^{s}\frac{1}{p_i},~ \delta>0$ and $\Delta=\frac{(2s-1)}{\delta}+2$. If $q^{\frac{m}{2}-2}>(n+2)\Delta W(l)^2$ then $(q,m)\in Q_{n}$.
\end{theorem}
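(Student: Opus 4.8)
The plan is to mimic the argument in the proof of Theorem~\ref{Theorem1}, but to replace the crude "all characters at once" estimate with a sieve that isolates the primes of $q^m-1$ that do not divide $l$. First I would recall the standard sieving inequality: writing $N_f(d) := N_{f,n,a,b}(d,d)$ for brevity, one has a sieving identity of the form
\begin{align*}
N_f(q^m-1) \;\geq\; \sum_{i=1}^{s} N_f(p_i l) \;-\; (s-1)\,N_f(l),
\end{align*}
which is the usual inclusion–exclusion-type lower bound used in these primitive-element problems (it follows from the fact that an element is $(q^m-1)$-free iff it is $p_i l$-free for every $i$, together with Bonferroni). So the first step is to state and justify this inequality.

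Next I would estimate each term $N_f(p_i l)$ and $N_f(l)$ using the character-sum machinery already set up in the proof of Theorem~\ref{Theorem1}. The key observation, carried over verbatim, is that for any divisor $d \mid q^m-1$ and any $(\chi_{d_1},\chi_{d_2},u,v) \neq (\chi_1,\chi_1,0,0)$ one has $|\chi_{f,a,b}(\chi_{d_1},\chi_{d_2})| \leq (n+2)q^{m/2+2}$, while the trivial term contributes the main term $q^m - |S| \geq q^m - (n+1)$. Writing $\theta := \theta(l)$ (and noting $\theta(p_i l) \geq \theta$ up to the harmless factor $1-1/p_i$, which is absorbed), a short computation gives
\begin{align*}
N_f(p_i l) \;\geq\; \frac{\theta(p_i l)\,\theta(p_i l)}{q^2}\Big( q^m - (n+1) - (n+2)q^{\frac{m}{2}+2}\big(W(p_i l)^2 - 1\big)\Big),
\end{align*}
and similarly for $N_f(l)$. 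Substituting these into the sieving inequality and using $W(p_i l) = 2 W(l)$, one collects the "error" contributions; the $W(l)^2$-terms combine with coefficient $\sum_i \frac{1}{p_i}$-type weights, and after factoring out $\frac{\theta(l)^2}{q^2}$ and using $\delta = 1 - 2\sum 1/p_i > 0$ one arrives at a bound of the shape
\begin{align*}
N_f(q^m-1) \;>\; \frac{\theta(l)^2}{q^2}\,\delta\,\Big( q^m - (n+1) - (n+2)q^{\frac{m}{2}+2}\,\big(\tfrac{2s-1}{\delta}+2\big) W(l)^2 \Big),
\end{align*}
so that positivity is guaranteed as soon as $q^{m/2-2} > (n+2)\Delta W(l)^2$ with $\Delta = \frac{2s-1}{\delta}+2$ (the $(n+1)$ term being negligible compared to the rest, exactly as in Theorem~\ref{Theorem1}).

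The main obstacle, and the step requiring genuine care rather than routine bookkeeping, is the consolidation of the error terms: one must track how the factors $\theta(p_i l)/\theta(l) = 1 - 1/p_i$ interact with the $\frac{1}{p_i}$ weights that arise from comparing $N_f(p_i l)$ against $N_f(l)$, and verify that the worst case is precisely captured by $\delta$ and that the combinatorial coefficient on the dominant $W(l)^2$ error is exactly $2s-1$ before dividing by $\delta$. I would also need to check the edge cases $s=0$ (where the statement degenerates to Theorem~\ref{Theorem1} with $\Delta = 2$, hence a slightly weaker constant than the $1$ implicit there — still fine) and $s=1$. Once the error consolidation is done correctly, the conclusion $(q,m) \in Q_n$ follows immediately from $N_{f,n,a,b}(q^m-1,q^m-1) > 0$ for every $f \in R_n$ and every prescribed $a,b \in \mathbb{F}_q$, just as at the end of the proof of Theorem~\ref{Theorem1}.
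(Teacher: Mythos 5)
Your sieving inequality is stated on the diagonal, $N_{f,n,a,b}(q^m-1,q^m-1)\geq \sum_{i=1}^{s}N_{f,n,a,b}(p_il,p_il)-(s-1)N_{f,n,a,b}(l,l)$, and while that inequality is itself valid, it cannot produce the constant $\Delta=\frac{2s-1}{\delta}+2$ with $\delta=1-2\sum_i\frac{1}{p_i}$ that the theorem asserts. When you expand $N_{f,n,a,b}(p_il,p_il)$ by characters, the part with $d_1,d_2\mid l$ equals $\bigl(1-\frac{1}{p_i}\bigr)^2N_{f,n,a,b}(l,l)$, and the remaining character pairs number $W(p_il)^2-W(l)^2=3W(l)^2$, so each prime contributes an error of size up to $3W(l)^2(n+2)q^{\frac{m}{2}+2}$ (times the $\theta$-factors). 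Carrying this through (after the necessary regrouping — note that you cannot simply substitute lower bounds into the term $-(s-1)N_{f,n,a,b}(l,l)$, which carries a negative coefficient) gives a sufficient condition of the shape $q^{\frac{m}{2}-2}>(n+2)\Bigl(1+\frac{3\sum_i(1-1/p_i)^2}{\delta'}\Bigr)W(l)^2$ with $\delta'=1-\sum_i\bigl(\frac{2}{p_i}-\frac{1}{p_i^2}\bigr)$, i.e.\ roughly $3s/\delta$ rather than $(2s-1)/\delta+2$; already for $s=1$ and a large prime $p_1$ this is about $4$ versus the stated $\approx 3$. So your "consolidation of error terms" step, which you flag as the delicate one, does not in fact land on the stated $\Delta$, and the proposal proves only a weaker theorem.

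The intended argument (the paper omits it, citing the method of Anju–Sharma–Cohen) sieves the two freeness conditions \emph{separately}: one uses
$N_{f,n,a,b}(q^m-1,q^m-1)\geq \sum_{i=1}^{s}N_{f,n,a,b}(p_il,l)+\sum_{i=1}^{s}N_{f,n,a,b}(l,p_il)-(2s-1)N_{f,n,a,b}(l,l)$,
rewrites the right side as $\delta\,N_{f,n,a,b}(l,l)+\sum_i\bigl[N_{f,n,a,b}(p_il,l)-(1-\tfrac{1}{p_i})N_{f,n,a,b}(l,l)\bigr]+\sum_i\bigl[N_{f,n,a,b}(l,p_il)-(1-\tfrac{1}{p_i})N_{f,n,a,b}(l,l)\bigr]$, and observes that in each bracket the main terms cancel exactly (since $\theta(p_il)=(1-\tfrac{1}{p_i})\theta(l)$) while the new character pairs number only $W(l)^2$ per bracket, each bounded by $(n+2)q^{\frac{m}{2}+2}$ as in the proof of Theorem \ref{Theorem1}. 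It is precisely the $2s$ off-diagonal brackets (hence the $2$ in $\delta$) and the coefficient $2s-1$ that combine, via $2\sum_i 1/p_i=1-\delta$, into the stated $\Delta=\frac{2s-1}{\delta}+2$. Replacing your diagonal sieve by this coordinatewise sieve is the missing idea; the rest of your outline (the character-sum bound, the treatment of the trivial term, and the final positivity argument) is consistent with the paper's framework.
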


\section{Computaions for $Q_2$}
By [5], for $m\leq 4$, there does not exists any primitive element $\alpha$ such that $\text{Tr}_{\mathbb{F}_{q^m}/ \mathbb{F}_q}(\alpha)=0$ and $\text{Tr}_{\mathbb{F}_{q^m}/ \mathbb{F}_q}(\alpha^{-1})=0$. The cases $m=5,$ and $6$, demand an extensive computation and seems to call for a different approach. Consequently, we defer the study of these cases on another occasion. In this paper, we consider the cases $m\geq 7$.  

 First we assume that $\omega(q^m-1) \geq 473$. Then using Lemma \ref{473} and (\ref{Main condition}),  if $q^{\frac{m}{2}-2}>4q^{\frac{m}{5}}$ i.e if $q^{\frac{3m}{10}-2}>4$ or $q^m>4^{\frac{10m}{3m-20}}$ then $(q,m)\in Q_2$. But $m\geq 7$ gives $\frac{10m}{3m-20}\leq 70$. Hence, if $q^m>4^{70}$ then $(q,m)\in Q_2$. Which is true for $\omega(q^m-1) \geq 473$.

Therefore, we can assume $\omega(q^m-1)\leq 472$. To make the further progress we use the sieving Theorem \ref{Sieve} in place of Theorem \ref{Theorem1}.  Let $31\leq \omega(q^m-1) \leq 472$. In Theorem \ref{Sieve}, let $l$ to be the product of least $31$ primes dividing $q^m-1$ i.e. $W(l)=2^{31}$. Then $s\leq 441$ and $\delta$ will be at least its value when $\{p_1, p_2, \cdots ,p_{441}\}=\{131, 137, \cdots , 3347\}$. This gives $\delta>0.0008225$ and $\Delta<1071081.2759510$, hence $4\Delta W(l)^2<1.9758 \times 10^{25}=R$(say). By Theorem \ref{Sieve} $(q,m)\in Q_2$ if $q^{\frac{m}{2}-2}>R$ that is  if $q>R^{\frac{2}{m-4}}$ or $q^m>R^{\frac{2m}{m-4}}$. But $m\geq 7$ implies $\frac{2m}{m-4}\leq \frac{14}{3}$.  Therefore, if $q^m>R^{\frac{14}{3}}$ or $q^m>1.1138\times 10^{118}$ then $(q,m)\in Q_2$. Hence, $\omega(q^m-1)\geq 62$ gives $(q,m)\in Q_2$. 

We repeat the above process of Theorem \ref{Sieve} with the values in first part of Table 1. Hence $(q,m)\in Q_2$ if $q^m>(2749163)^\frac{14}{3}$ or $q^m>1.210\times 10^{30}$ for $m=7$, and $q^m>(2749163)^4$ or $q^m>5.7122\times 10^{25}$ for $m\geq 8$. $(\because m\geq 8 \implies \frac{2m}{m-4}\leq 4)$ 

Therefore, for $m\geq 8$, it is sufficient if $\omega(q^m-1)\geq 20$. So, repeated use of Theorem \ref{Sieve} for values in second part of Table 1, provides $(q, m)\in Q_2$ if $q^m> (969830)^4 $ or $q^m>8.8468\times 10^{23}$.

\begin{center}
	$$\text{Table 1}$$
	\begin{tabular}{|m{.6cm}|m{3.2cm}|m{.7cm}|m{2cm}|m{2.2cm}|m{2.3cm} |}
		\hline
		Sr. No.    &  $a \leq \omega(q^m-1)\leq b$ & $W(l)$   &  $\delta>$   & $\Delta<$ & $4 \Delta W(l)^2$ $<$ \\
		\hline
		1.      &  $a=10,~~ b=61$ & $2^{10}$ & $0.0479926$  & $2106.4882452$ & $8835252073$  \\
		2.      &$a=7,~~ b=29$  & $2^7$ & $0.1237982$   & $349.3392467$ & $22894297$  \\
		
		3.	    & $a=6,~~ b=23$  & $2^6$ & $0.1255013$   & $264.9453729$ & $4340865$\\
		
		4.	    & $a=6,~~ b=22$  & $2^6$ & $0.1495977$   & $209.2223842$ & $3427900$\\
		
		5.	    & $a=6, ~~b=21$  & $2^6$ & $0.1749141$   & $167.7955787$ & $2749163$\\
		\hline 
		6.	    & $a=5, ~~b=19$  & $2^5$ & $0.0766343$   & $354.3225878$ & $1451306$\\
		
		7.	    & $a=5, ~~b=18$  & $2^5$ & $0.1064850$   & $236.7747170$ & $969830$\\
		\hline	
	\end{tabular}
\end{center}

 Hence $(q,m)\in Q_2$ unless $m=7$ and $q<19625$, $m=8$ and $q<985$, $m=9$ and $q<458$, $m=10$ and $q<249$, $m=11$ and $q<151$, $m=12$ and $q<99$, $m=13$ and $q<70$, $m=14$ and $q<52$, $m=15$ and $q<40$, $m=16$ and $q<32$, $m=17$ and $q<26$, $m=18$ and $q<22$, $m=19$ and $q<19$, $m=20$ and $q<16$, $m=21$ and $q<14$, $m=22$ and $q<13$, $m=23$ and $q<11$, $m=24, 25$ and $q<10$, $m=26$ and $q<9$, $m=27, 28$ and $q<8$, $29\leq m\leq 34$ and $q=2,3,4,5$. $35\leq m\leq 39$ and $q=2,3,4$.  $40\leq m \leq 50$ and $q=2, 3$. $51\leq m \leq 79$ and $q=2$. 

For each of above values we verify (\ref{Main condition}) and get a list of $494$ possible exceptions (see appendix 1). Finally, for these possible exceptions we see that Theorem \ref{Sieve} is satisfied for some choice of $l$ except the values stated in Theorem \ref{case n=2}(see appendix 2). Which proves Theorem \ref{case n=2}.

\textbf{Note:} In the case $q=4$ with $m=16, 20 ~\& ~24$ and $q=8$ with $m= 20$ equality occurs in (\ref{Main condition}), so we keep it in exception for (\ref{Main condition}) and verified using  Theorem \ref{Sieve}.

Using similar arguments, for each $n\in \mathbb{N}$, one can get a subset of $Q_n$. 
\bibliographystyle{plain}
\bibliography{Prescribedtrace}

\begin{center}
	\section*{Appendix 1.}
\end{center}

\textbf{For m=7:} 2, 4, 8, 16, 32, 64, 256, 512, 1024, 4096, 3, 9, 27, 81, 243, 729, 6561, 5, 25, 125, 625, 3125, 15625, 7, 49, 343,
2401, 11, 121, 1331, 14641, 13, 169, 2197, 19, 361, 23, 529, 29, 31, 37, 41, 1681, 43, 47, 53, 59, 3481, 61, 67,
4489, 71, 79, 6241, 83, 6889, 97, 9409, 101, 103, 107, 109, 127, 131, 17161, 139, 19321, 151, 157, 181, 191, 197,
199, 211, 223, 227, 229, 233, 239, 241, 269, 277, 281, 311, 331, 359, 367, 389, 397, 401, 409, 431, 439, 463, 491,
499, 509, 547, 571, 593, 601, 607, 613, 619, 631, 643, 661, 691, 727, 877, 919, 953, 967, 1021, 1051, 1063, 1093,
1123, 1151, 1171, 1181, 1231, 1283, 1301, 1303, 1321, 1381, 1399, 1453, 1481, 1483, 1499, 1523, 1531, 1597, 1607,
1693, 1741, 1951, 2003, 2141, 2161, 2281, 2311, 2381, 2591, 2713, 2731, 2791, 2887, 2971, 3041, 3083, 3191, 3221,
3229, 3271, 3301, 3307, 3313, 3499, 3547, 3571, 3739, 3851, 3911, 4013, 4219, 4243, 4327, 4957, 5419, 5923, 5981,
6067, 6211, 6491, 6577, 7159, 7759, 8009, 8053, 8191, 8807, 9103, 9403, 9421, 9463, 9719, 9767, 9871, 9901, 9967,
10949, 10957, 12959, 14323, 15313, 15511, 16381, 17431, 17491, 19483.  \\
\textbf{For m=8:} 2, 4, 8, 16, 32, 64, 128, 512, 3, 9, 27, 81, 243, 729, 5, 25, 125, 7, 49, 343, 11, 121, 13, 169, 17, 19, 361, 23,
529, 29, 841, 31, 961, 37, 41, 43, 47, 53, 59, 61, 67, 71, 73, 79, 83, 89, 97, 101, 103, 107, 109, 113, 127, 131,
137, 139, 149, 151, 157, 163, 167, 173, 179, 181, 191, 193, 197, 211, 223, 227, 229, 233, 239, 241, 251, 263, 269,
271, 277, 281, 283, 293, 307, 311, 313, 317, 331, 337, 347, 349, 353, 359, 367, 373, 379, 383, 389, 397, 401, 409,
419, 421, 433, 439, 443, 457, 461, 463, 467, 491, 499, 509, 521, 547, 557, 563, 571, 587, 593, 599, 601, 617, 619,
631, 647, 653, 659, 661, 683, 691, 701, 709, 727, 733, 739, 743, 757, 773, 787, 797, 809, 811, 823, 827, 829, 839,
853, 857, 859, 863, 881, 887, 911, 919, 929, 937, 941, 947, 953, 967, 971, 977, 983.\\
\textbf{For m=9:} 2, 4, 8, 16, 32, 256, 3, 9, 27, 81, 5, 25, 125, 7, 49, 11, 121, 13, 169, 19, 23, 29, 31, 37, 43, 47, 53, 61, 79, 83,
137, 139, 211, 367, 379.  \\
\textbf{For m=10:} 2, 4, 8, 16, 32, 64, 3, 9, 27, 5, 25, 125, 7, 49, 11, 13, 169, 17, 19, 23, 29, 31, 37, 41, 53, 59, 61, 89, 101, 113,
137, 139, 149.\\
\textbf{For m=11:} 2, 4, 16, 3, 9, 7, 13.  \\
\textbf{For m=12:} 2, 4, 8, 16, 32, 64, 3, 9, 27, 81, 5, 7, 49, 11, 13, 17, 19, 23, 29, 31, 37, 41, 43, 47, 89. \\
\textbf{For m=14:} 2, 4, 3, 5.  \\
\textbf{For m=15:} 2, 4, 16, 3, 9,5.\\
\textbf{For m=16:} 2, 4, 8, 3, 5. \\
\textbf{For m=18:} 2, 4, 3.  \\
\textbf{For m=20:} 2, 4, 8.  \\
\textbf{For m=22:} 2.  \\
\textbf{For m=24:} 2, 3. \\
\textbf{For m=28:} 2. \\
\textbf{For m=30:} 2. \\
\textbf{For m=36:} 2. 
\newpage

\begin{center}
	\section*{Appendix 2.}
\end{center}

\begin{center}
	
	\begin{tabular}{|m{.6cm}|m{1cm}|m{0.5cm}|m{.6cm}|m{2.6cm}|m{2.8cm}|}
		\hline
		\multicolumn{6}{|c|}{\textbf{m=7}}\\
		\hline
		Sr. No.   &  $ q $  &  $l$ & $s$   &  $\delta>$   & $\Delta<$  \\		\hline
		1 & 32 &  1 & 4 & 0.8915505547 & 9.8514897025\\2 & 64 &  3 &
		5 & 0.6457222649 & 15.9378808629\\3 & 256 &  3 & 7 & 0.3334285228 & 40.9888660087\\4 & 512 &  1 &
		6 & 0.6651810595 & 18.5368508963\\5 & 1024 &  3 & 8 & 0.6560800976 & 24.8630620765\\6 & 4096 &
		3 & 11 & 0.0036734808 & 5718.64881982\\7 & 27 &  2 & 3 & 0.8443185865 & 7.92193525032\\8 &
		81 &  2 & 5 & 0.5254270858 & 19.1289228165\\9 & 243 &  2 & 4 & 0.7881829771 & 10.8811864792\\10 &
		729 &  2 & 7 & 0.5075549201 & 27.6129917822\\11 & 6561 &  2 & 8 & 0.4766388668 & 33.4703668605\\
		12 & 125 &  2 & 4 & 0.9301005749 & 9.52606781311\\13 & 625 &  2 &
		6 & 0.1059649094 & 105.807949812\\14 & 3125 &  2 & 7 & 0.7767860571 & 18.7356247978\\15 &
		15625 &  6 & 10 & 0.5084437155 & 39.3689346910\\16 & 343 &  2 & 5 & 0.1586820930 & 58.7171747389\\
		17 & 2401 &  6 & 6 & 0.5107174129 & 23.5383296552\\18 & 121 &  6 &
		4 & 0.5534430089 & 14.6480954449\\19 & 1331 &  2 & 8 & 0.1609994433 & 95.1680239735\\20 &
		14641 &  6 & 8 & 0.4506559092 & 35.2848181769\\21 & 169 &  6 & 4 & 0.6452292307 & 12.8488575322\\
		22 & 2197 &  2 & 7 & 0.2444426756 & 55.1822030064\\23 & 361 &  6 &
		5 & 0.5869576008 & 17.3333051429\\24 & 529 &  2 & 7 & 0.0507348497 & 258.234128344\\25 & 29 &
		2 & 2 & 0.7142856915& 6.20000013363\\26 & 41 &  2 & 3 & 0.5534883544 & 11.0336137337\\27 &
		1681 &  6 & 6 & 0.2396048402& 47.9089223226\\28 & 47 &  2 & 3 & 0.8665318425 & 7.77012840672\\
		29 & 53 &  2 & 3 & 0.7771883263 & 8.43344712023\\30 & 59 &  2 & 5 & 0.8743370608& 12.2935131115\\
		31 & 3481 &  6 & 8 & 0.4736536262 & 33.6687114132\\32 & 61 &  6 &
		2 & 0.5999999999 & 7.00000000032\\33 & 67 &  6 & 3 & 0.8181666169 & 8.11122465350\\34 & 4489 &
		6 & 8 & 0.6204795746 & 26.1748489599\\
		\hline
	\end{tabular}
\end{center}

\begin{center}
	
	\begin{tabular}{|m{.6cm}|m{1cm}|m{0.5cm}|m{.6cm}|m{2.6cm}|m{2.8cm}|}
		\hline
		
		35 & 71 &  2 & 4 & 0.3120206134 & 24.4344152110\\36 &
		6241 &  6 & 8 & 0.4302431672 & 36.8640051531\\37 & 83 &  2 & 3 & 0.8822539947 & 7.66730219370\\38 & 6889 &  6 & 7 & 0.5686873450 & 24.8596611367\\39 & 97 &  2 &
		4 & 0.2847533542 & 26.5826779370\\40 & 9409 &  6 & 7 & 0.6198374698 & 22.9732399730\\41 & 101 &
		2 & 3 & 0.5718309857 & 10.7438423666\\42 & 103 &  6 & 4 & 0.7852184064 & 10.9147171573\\43 &
		107 &  2 & 4 & 0.8467870041 & 10.2665416046\\44 & 109 &  2 & 4 & 0.3085167430 & 24.6892062028\\
		45 & 127 &  6 & 4 & 0.6677386337& 12.4831436227\\46 & 131 &  2 &
		5 & 0.4303858120 & 22.9114700084\\47 & 17161 &  6 & 9 & 0.2020426137 & 86.1406656104\\48 & 139 &
		2 & 5 & 0.1767112365 & 52.9305473503\\49 & 19321 &  6 & 9 & 0.1492144906& 115.929953642\\50 &
		151 &  6 & 3 & 0.5986657769 & 10.3519055083\\51 & 157 &  2 & 4 & 0.1793272162 & 41.0347887376\\
		52 & 181 &  6 & 4 & 0.5239170445 & 15.3608938142\\53 & 191 &  2 &
		6 & 0.4686374313 & 25.4723034551\\54 & 197 &  2 & 4 & 0.6452990807 & 12.8476832038\\55 & 199 &
		2 & 6 & 0.0708057824 & 157.354543257\\56 & 211 &  6 & 4 & 0.3142791551 & 24.2731921151\\57 &
		223 &  2 & 6 & 0.2050697732 & 55.6402797130\\58 & 227 &  2 & 4 & 0.9129352855 & 9.66757524931\\
		59 & 229 &  2 & 4 & 0.2250984072 & 33.0975101345\\60 & 233 &  2 &
		4 & 0.8927131842 & 9.84126427571\\61 & 239 &  2 & 5 & 0.5276644609 & 19.0562936615\\62 & 241 &
		6 & 3 & 0.5999683101& 10.3337734932\\63 & 269 &  2 & 6 & 0.9101505170 & 14.0859130373\\64 &
		277 &  2 & 5 & 0.1773946096 & 52.7343487796\\65 & 281 &  2 & 5 & 0.2451930950 & 38.7057644785\\
		66 & 311 &  2 & 5 & 0.5252308376 & 19.1353228999\\67 & 331 &  6 &
		4 & 0.4181808978 & 18.7391672750\\68 & 359 &  2 & 6 & 0.9723990300 & 13.3122284787\\69 & 367 &
		2 & 5 & 0.2828387438 & 33.8202516257\\70 & 389 &  2 & 6 & 0.9522319837 & 13.5518068992\\71 & 397 &
		2 & 6 & 0.0667977056 & 166.676314754\\72 & 401 &  2 & 5 & 0.5250909279 & 19.1398885811\\73 &
		409 &  2 & 5 & 0.2152344921 & 43.8148592663\\
		\hline
	\end{tabular}
\end{center}

\begin{center}
	
	\begin{tabular}{|m{.6cm}|m{1cm}|m{0.5cm}|m{.6cm}|m{2.6cm}|m{2.8cm}|}
		\hline
		74 & 431 &  2 & 7 & 0.4783654121 & 29.1758778334\\
		75 & 439 &  2 & 5 & 0.3056508944 & 31.4453579701\\76 & 463 &  6 &
		4 & 0.5042985183 & 15.8806673921\\	77 & 491 &  2 & 5 & 0.3091412729 & 31.1129033442\\78 & 499 &
		2 & 5 & 0.3086755298 & 31.1568301672\\79 & 509 &  2 & 5 & 0.9143787695 & 11.8427482126\\80 & 547 &
		6 & 4 & 0.4914740431 & 16.2428681573\\81 & 571 &  6 & 4 & 0.4257713248 & 18.4407502132\\82 & 593 &
		2 & 5 & 0.9177026257 & 11.8070984515\\83 & 601 &  6 & 4 & 0.5968298848 & 13.7286352074\\84 &
		607 &  2 & 5 & 0.3134889520 & 30.7091457012\\85 & 613 &  2 & 6 & 0.1409697330 & 80.0309344810\\
		86 & 619 &  2 & 5 & 0.3133247281 & 30.7241931146\\87 & 631 &  6 &
		4 & 0.3142853814 & 24.2727508627\\88 & 643 &  2 & 5 & 0.2523820851 & 37.6602173044\\89 & 661 &  6 &
		5 & 0.3027046730 & 31.7319493281\\90 & 691 &  6 & 6 & 0.4345766850 & 27.3119883722\\

		91 & 727 &
		2 & 6 & 0.1183303978 & 94.9600525620\\92 & 877 &  2 & 6 & 0.2276201911 & 50.3261170409\\93 &
		919 &  2 & 7 & 0.0995960154 & 132.527310197\\94 & 953 &  2 & 8 & 0.4607797236 & 34.5535157679\\
		95 & 967 &  6 & 5 & 0.5713388916 & 17.7524721868\\96 & 1021 &  6 &
		5 & 0.4713606737 & 21.0936590640\\97 & 1051 &  6 & 5 & 0.2452704915 & 38.6941817730\\98 & 1063 &
		2 & 6 & 0.2926718778 & 39.5847521769\\99 & 1093 &  6 & 5 & 0.4913401654 & 20.3172486870\\100 &
		1123 &  2 & 7 & 0.0156388796 & 833.261594484\\101 & 1151 &  2 & 6 & 0.4283244566 & 27.6814660718\\
		102 & 1171 &  6 & 5 & 0.4459708445 & 22.1806914287\\103 & 1181 &  2 &
		6 & 0.5378505215 & 22.4517789968\\104 & 1231 &  6 & 6 & 0.5443279941 & 22.2084039733\\105 &
		1283 &  2 & 6 & 0.9241284167 & 13.9031076203\\106 & 1301 &  2 & 6 & 0.3306766367 & 35.2651260382\\
		107 & 1303 &  6 & 5 & 0.6497294416 & 15.8519196197\\108 & 1321 &  6 &
		5 & 0.3490947277 & 27.7809679830\\109 & 1381 &  6 & 5 & 0.4839161508 & 20.5982633252\\110 &
		1399 &  2 & 6 & 0.2400354471 & 47.8265649199\\111 & 1453 &  2 & 6 & 0.1494258883 & 75.6150885335\\
		112 & 1481 &  2 & 6 & 0.5428813498 & 22.2622543625\\
		
		\hline
	\end{tabular}
\end{center}

\begin{center}
	
	\begin{tabular}{|m{.6cm}|m{1cm}|m{0.5cm}|m{.6cm}|m{2.6cm}|m{2.8cm}|}
		\hline
		113 & 1483 &  2 &
		7 & 0.0175562934 & 742.475204429\\114 & 1499 &  2 & 6 & 0.6087996314 & 20.0683420812\\115 &
		1523 &  2 & 6 & 0.9588049074 & 13.4726154557\\
		116 & 1531 &  6 & 6 & 0.4102176092 & 28.8150360946\\
		117 & 1597 &  6 & 5 & 0.6063304365 & 16.8433914220\\118 & 1607 &  2 &
		6 & 0.7161038245& 17.3609010635\\119 & 1693 &  2 & 7 & 0.2380189853 & 56.6174918756\\120 &
		1741 &  6 & 6 & 0.4563537984 & 26.1041052716\\121 & 1951 &  6 & 7 & 0.3716254882 & 36.9814542067\\
		122 & 2003 &  2 & 7 & 0.3768219666 & 36.4990503533\\123 & 2141 &  2 &
		7 & 0.5104339094 & 27.4685273810\\124 & 2161 &  6 & 7 & 0.5155635456 & 27.2151264589\\125 & 2281 &
		6 & 6 & 0.4751049549 & 25.1527789517\\126 & 2311 &  6 & 6 & 0.0635019652 & 175.222985454\\127 &
		2381 &  2 & 8 & 0.1487999594 & 102.806479076\\128 & 2591 &  2 & 7 & 0.2053401141 & 65.3095975976\\
		129 & 2713 &  2 & 7 & 0.2001186347 & 66.9614665471\\130 & 2731 &  6 &
		7 & 0.1351215775 & 98.2096523175\\131 & 2791 &  6 & 7 & 0.4597776097 & 30.2745390926\\132 &
		2887 &  2 & 7 & 0.0457255113 & 286.305185939\\133 & 2971 &  6 &
		6 & 0.3663987435 & 32.0219370123\\134 & 3041 &  2 & 7 & 0.4196313309 & 32.9795743074\\135 &
		3083 &  2 & 7 & 0.8518534046 & 17.2608417466\\136 & 3191 &  2 & 7 & 0.3330309743 & 41.0354081178\\ 
		137 & 3221 &  2 & 7 & 0.2241558119 & 59.9953733283\\138 & 3229 &  2 &
		7 & 0.2781932675 & 48.7301028319\\139 & 3271 &  6 & 6 & 0.5126290898 & 23.4580097349\\140 &
		3301 &  6 & 7 & 0.3469499305 & 39.4693834860\\141 & 3307 &  2 & 7 & 0.1383387900 & 95.9721967522\\
		142 & 3313 &  2 & 7 & 0.1679193424 & 79.4181211729\\143 & 3499 &  2 &
		9 & 0.0119063509 & 1429.80941247\\144 & 3547 &  2 & 7 & 0.2574629280 & 52.4927062684\\145 &
		3571 &  6 & 6 & 0.1965247699 & 57.9725880870\\146 & 3739 &  6 & 6 & 0.6450851929 & 19.0520113005\\
		147 & 3851 &  2 & 7 & 0.0620442258 & 211.527958983\\148 & 3911 &  2 &
		7 & 0.2769475060 & 48.9403035387\\149 & 4013 &  2 & 7 & 0.8129600979 & 17.9909447376\\150 &
		4219 &  2 & 8 & 0.1394521022 & 109.563814080\\151 & 4243 &  2 &
		8 & 0.0205124685 & 733.262547676\\
		\hline
	\end{tabular}
\end{center}

\begin{center}
	\begin{tabular}{|m{.6cm}|m{1cm}|m{0.5cm}|m{.6cm}|m{2.6cm}|m{2.8cm}|}
		\hline
		152 & 4327 &  2 & 8 & 0.0268222607 & 561.236975841\\153 &
		4957 &  6 & 7 & 0.6599172480 & 21.6994396474\\154 & 5419 &  6 & 7 & 0.5879914646 & 24.1091644709\\
		155 & 5923 &  6 & 7 & 0.5744529082 & 24.6302274977\\156 & 5981 &  2 &
		8 & 0.2435838274 & 63.5804429976\\157 & 6067 &  2 & 8 & 0.2508963975& 61.7856332307\\158 & 6211 &
		6 & 7 & 0.4850483309 & 28.8014529098\\159 & 6491 &  2 & 8 & 0.2406360147 & 64.3348089174\\160 &
		6577 &  2 & 8 & 0.1750366819 & 87.6963228157\\161 & 7159 &  2 & 8 & 0.2139241299 & 72.1183172098\\
		162 & 7759 &  2 & 9 & 0.2220720881 & 78.5517185881\\163 & 8009 &  2 &
		9 & 0.3122606079 & 56.4417053129\\164 & 8053 &  6 & 9 & 0.6622828307 & 27.6687916572\\165 &
		8191 &  6 & 7 & 0.1037179298 & 127.339948604\\166 & 8807 &  2 & 8 & 0.4094012243 & 38.6388743060\\
		167 & 9103 &  2 & 8 & 0.2251270564 & 68.6290415515\\168 & 9403 &  2 &
		8 & 0.2612195377 & 59.4229635688\\169 & 9421 &  6 & 7 & 0.4630157204 & 30.0768004726\\170 &
		9463 &  2 & 8 & 0.1753820443 & 87.5275695620\\171 & 9719 &  2 & 8 & 0.9276007580 & 18.1707500444\\
		172 & 9767 &  2 & 8 & 0.7880692489 & 21.0338603111\\173 & 9871 &  6 &
		7 & 0.2710577749& 49.9602549701\\174 & 9901 &  6 & 7 & 0.3422488091 & 39.9840620444\\175 &
		9967 &  2 & 8 & 0.0689507325 & 219.546637155\\
		176 & 10949 &  2 & 10 & 0.4245538380 & 46.7528635858\\177 & 10957 &  6 &
		8 & 0.6547596918& 24.9091683356\\178 & 12959 &  2 & 9 & 0.5016182530 & 35.8903137936\\179 &
		14323 &  6 & 8 & 0.4181086282 & 37.8758441839\\180 & 15313 &  2 &
		9 & 0.046358850 & 368.704519260\\181 & 15511 &  6 & 8 & 0.2887683586 & 53.9447493156\\182 &
		16381 &  6 & 8 & 0.0242915320 & 619.499133949\\183 & 17431 &  6 &
		9 & 0.2272887003 & 76.7947433194\\184 & 17491 &  6 & 9 & 0.3676629404 & 48.2380026110\\185 &
		19483 &  2 & 9 & 0.0840160090 & 204.342389188\\
		
		\hline
	\end{tabular}
\end{center}

\begin{center}
	
	\begin{tabular}{|m{.6cm}|m{1cm}|m{0.5cm}|m{.6cm}|m{2.6cm}|m{2.8cm}|}
		\hline
		\multicolumn{6}{|c|}{\textbf{m=8}}\\
		\hline
		Sr. No.   &  $ q $  &  $l$ & $s$   &  $\delta>$   & $\Delta<$  \\		\hline
		1 & 32 &  3 & 6 & 0.1872057176 & 60.75888909556\\2 & 64 &  15 &
		7 & 0.4031213203 & 34.24835637180\\3 & 128 &  3 & 7 & 0.3334285228 & 40.98886600865\\4 & 256 &  3 &
		6 & 0.4714198996 & 25.33376255281\\5 & 512 &  15 & 10 & 0.2247600824 & 86.53458367679\\6 & 81 &  2 &
		5 & 0.4232096661 & 23.26605491293\\7 & 243 &  2 & 6 & 0.3349209179 & 34.84357413644\\8 & 729 &  10 &
		10 & 0.3292579234 & 59.70552095249\\9 & 125 &  6 & 6 & 0.4862007398 & 24.62439996228\\10 & 49 &
		6 & 4 & 0.4806758665 & 16.56282806681\\11 & 343 &  6 & 9 & 0.2390141275 & 73.12550281118\\12 & 121 &
		6 & 5 & 0.4492925519 & 22.03149164234\\13 & 169 &  6 & 5 & 0.1964986043 & 47.80185202880\\14 &
		361 &  6 & 6 & 0.4706451953 & 25.37217103169\\15 & 529 &  6 & 7 & 0.2622404271 & 51.57282956953\\
		16 & 841 &  6 & 7 & 0.1918025335 & 69.77804098258\\17 & 961 &  6 &
		7 & 0.2677912784 & 50.54527031163\\18 & 53 &  6 & 5 & 0.3213807316 & 30.00416799836\\19 & 59 &  6 &
		6 & 0.4055381905 & 29.12444908078\\20 & 61 &  6 & 4 & 0.5344088910 & 15.09858446835\\21 & 67 &  6 &
		6 & 0.2937599500 & 39.44554013625\\22 & 71 &  6 & 4 & 0.3134922209 & 24.32910271236\\23 & 73 &  6 &
		5 & 0.3433191634 & 28.21467415274\\24 & 79 &  6 & 6 & 0.3902904463 & 30.18413850894\\25 & 89 &  6 &
		6 & 0.2848157160& 40.62146426212\\26 & 97 &  6 & 5 & 0.3035660971 & 31.64757949958\\27 & 101 &  6 &
		5 & 0.4594855299& 21.58712388658\\28 & 103 &  6 & 5 & 0.3266217376 & 29.55481023604\\29 & 107 &
		6 & 5 & 0.5529262524 & 18.27703506580\\30 & 109 &  6 & 6 & 0.2111776470 & 54.08884629189\\31 &
		113 &  6 & 5 & 0.2074563611 & 45.38261767584\\32 & 127 &  6 & 6 & 0.1807643788 & 62.85269713586\\
		33 & 131 &  6 & 6 & 0.2464019414 & 46.64250539872\\34 & 137 &  6 &
		7 & 0.3434750023 & 39.84846032837\\35 & 139 &  6 & 6 & 0.2238740722 & 51.13476531064\\36 & 149 &
		6 & 6 & 0.4248936099 & 27.88883368115\\
		
		\hline
	\end{tabular}
\end{center}

\begin{center}
	
	\begin{tabular}{|m{.6cm}|m{1cm}|m{0.5cm}|m{.6cm}|m{2.6cm}|m{2.8cm}|}
		\hline
		37 & 151 &  6 & 6 & 0.2209629969 & 51.78209090649\\
		38 &
		157 &  6 & 7 & 0.2165249554 & 62.0392688024\\39 & 163 &  6 & 5 & 0.5471355910& 18.4493046111\\
		40 & 167 &  6 & 6 & 0.2406915271 & 47.7016502774\\41 & 173 &  6 &
		7 & 0.4057408925 & 34.0401523226\\42 & 179 &  6 & 6 & 0.4012069133 & 29.4172743143\\43 & 181 &
		6 & 6 & 0.1591853526 & 71.1018351472\\44 & 191 &  6 & 7 & 0.2052896009 & 65.3251754578\\45 &
		193 &  6 & 5 & 0.5581757824 & 18.1239528537\\46 & 197 &  6 & 6 & 0.1045547472 & 107.208039735\\47 &
		211 &  6 & 7 & 0.2403992243 & 56.0767135718\\48 & 223 &  6 & 7 & 0.1402470073 & 94.6935999903\\
		49 & 227 &  6 & 7 & 0.4538411986 & 30.6443805416\\50 & 229 &  6 &
		8 & 0.1078964707 & 141.022156135\\51 & 233 &  6 & 8 & 0.2989874283 & 52.1693334864\\52 & 239 &
		6 & 7 & 0.0374699277 & 348.944891005\\53 & 241 &  6 & 7 & 0.3446763887 & 39.7165376721\\54 &
		251 &  6 & 5 & 0.1782900306 & 52.4795471158\\55 & 263 &  6 & 7 & 0.2842775409 & 47.7299579721\\
		56 & 269 &  6 & 6 & 0.5440800215 & 22.2176142563\\57 & 271 &  6 &
		6 & 0.4727668348& 25.2672835513\\58 & 277 &  6 & 6 & 0.4948790097 & 24.2276552122\\59 & 281 &
		30 & 6 & 0.3995807546 & 29.5288533626\\60 & 283 &  6 & 6 & 0.5233621328 & 23.0179516432\\61 &
		293 &  6 & 7 & 0.1493475784 & 89.0452680607\\62 & 307 &  30 & 7 & 0.1919345878 & 69.7314086351\\
		63 & 311 &  6 & 7 & 0.3606818465 & 38.0428453068\\64 & 313 &  6 & 6 & 0.3929944553 & 29.9902167847\\
		65 & 317 &  6 & 6 & 0.3805142188 & 30.9082495585\\66 & 331 &  6 &
		8 & 0.1576333752 & 97.1575132839\\67 & 337 &  6 & 6 & 0.1044388557 & 107.324784769\\68 & 347 &
		6 & 6 & 0.5163357360 & 23.3039680027\\69 & 349 &  6 & 6 & 0.1276402934 & 88.1796827816\\70 &
		353 &  6 & 6 & 0.2639079410 & 43.6812012402\\71 & 359 &  6 & 7 & 0.3167046688 & 43.0477055705\\
		72 & 367 &  6 & 6 & 0.4800483197 & 24.9143599672\\73 & 373 &  6 &
		7 & 0.2358267874 & 57.1252049975\\74 & 379 &  6 & 6 & 0.2082414656 & 54.8232932204\\75 & 383 &
		6 & 7 & 0.4052511510 & 34.0788724907\\	
		\hline
	\end{tabular}
\end{center}

\begin{center}
	
	\begin{tabular}{|m{.6cm}|m{1cm}|m{0.5cm}|m{.6cm}|m{2.6cm}|m{2.8cm}|}
		\hline
		
		76 & 389 &  6 & 8 & 0.2380009225 & 65.0249657815\\77 & 397 &
		6 & 6 & 0.4066612483 & 29.0495407313\\78 & 401 &  6 & 8 & 0.4137658068 & 38.2523914504\\79 & 409 &
		6 & 6 & 0.4330173581& 27.4031386794\\80 & 419 &  30 & 6 & 0.3774897437 & 31.1398645478\\81 &
		421 &  30 & 8 & 0.3931075085 & 40.1575006199\\82 & 433 &  6 & 7 & 0.1095437717 & 120.674022146\\
		83 & 439 &  6 & 7 & 0.3735860118 & 36.7978767595\\84 & 443 &  6 &
		7 & 0.2616809046 & 51.6788254997\\85 & 457 &  6 & 7 & 0.3454052974 & 39.6369444740\\86 & 461 &
		30 & 7 & 0.3259369732 & 41.8850117232\\87 & 463 &  30 & 7 & 0.1713902457 & 77.8502909061\\88 &
		467 &  6 & 7 & 0.2918612855 & 46.5417074576\\89 & 491 &  6 & 7 & 0.1319631598 & 100.512342527\\90 &
		499 &  6 & 8 & 0.3710824050 & 42.4222884053\\91 & 509 &  6 & 7 & 0.4548338866 & 30.5818633591\\
		92 & 521 &  6 & 7 & 0.3320767696 & 41.1475742599\\93 & 547 &  6 &
		8 & 0.0792943677 & 191.168542789\\94 & 557 &  6 & 7 & 0.3759720450 & 36.5770388319\\95 & 563 &
		6 & 9 & 0.3473154146 & 50.9468629502\\96 & 571 &  6 & 9 & 0.0832918468 & 206.101609618\\97 &
		587 &  6 & 7 & 0.1896844363 & 70.5348795789\\98 & 593 &  6 & 8 & 0.0888507896 & 170.822360123\\
		99 & 599 &  6 & 8 & 0.1967209780 & 78.2501292236\\100 & 601 &  6 &
		7 & 0.2091376164 & 64.1600275479\\101 & 617 &  6 & 8 & 0.1103965419 & 137.873821201\\102 & 619 &
		6 & 7 & 0.3620072268 & 37.9108852927\\103 & 631 &  6 & 7 & 0.1649222429 & 80.8250254445\\104 &
		647 &  6 & 7 & 0.2989531696 & 45.4850716386\\105 & 653 &  6 & 7 & 0.5198078501 & 27.0092413877\\
		106 & 659 &  30 & 8 & 0.2774521892 & 56.0633686880\\107 & 661 &  6 &
		7 & 0.2941420522 & 46.1963326884\\108 & 683 &  6 & 7 & 0.1995791693 & 67.1370583620\\109 & 691 &
		6 & 7 & 0.4894732779 & 28.5591618309\\110 & 701 &  6 & 7 & 0.0087511261 & 1487.52310204\\111 &
		709 &  6 & 7 & 0.4827427717 & 28.9294555238\\112 & 727 &  30 & 7 & 0.2564922470 & 52.6837931671\\
		113 & 733 &  6 & 7 & 0.2897863968 & 46.8606288745\\114 & 739 &  6 &
		7 & 0.3792397330 & 36.2791086112\\
		\hline
	\end{tabular}
\end{center}

\begin{center}
	
	\begin{tabular}{|m{.6cm}|m{1cm}|m{0.5cm}|m{.6cm}|m{2.6cm}|m{2.8cm}|}
		\hline
		115 & 743 &  6 & 9 & 0.1466957822 & 117.8860857583\\116 & 757 &
		6 & 8 & 0.1462365540 & 104.5735329923\\117 & 773 &  6 & 8 & 0.4073178011 & 38.82628149128\\118 &
		787 &  6 & 8 & 0.5489889240 & 29.32295560762\\119 & 797 &  6 & 8 & 0.0791949641 & 191.4059825400\\
		120 & 809 &  6 & 8 & 0.5290676509 & 30.35176176875\\121 & 811 &  6 &
		7 & 0.0394520642 & 331.5138095121\\122 & 823 &  6 & 8 & 0.4939009033 & 32.37046479958\\123 &
		827 &  6 & 8 & 0.0391455202 & 385.1856087699\\124 & 829 &  6 & 7 & 0.2535540288 & 53.27112378949\\
		125 & 839 &  6 & 7 & 0.2905223004 & 46.74699525357\\126 & 853 &  30 &
		9 & 0.3952108075 & 45.01501799704\\127 & 857 &  6 & 7 & 0.1865522381 & 71.68557510454\\128 & 859 &
		6 & 8 & 0.0848357711 & 178.8122078036\\129 & 863 &  6 & 8 & 0.3031669975 & 51.47768101438\\130 &
		881 &  6 & 7 & 0.1308872871 & 101.3220982703\\131 & 887 &  6 & 7 & 0.4701769660 & 29.64916391032\\
		132 & 911 &  30 & 9 & 0.3279388663 & 53.83893018476\\133 & 919 &  6 &
		7 & 0.3038412701 & 44.78549781670\\134 & 929 &  6 & 7 & 0.4176858634 & 33.12386876449\\135 & 937 &
		30 & 7 & 0.3923004102 & 35.13787000919\\136 & 941 &  6 & 7 & 0.3907873557 & 35.26617355448\\137 &
		947 &  6 & 8 & 0.3432078894 & 45.70528901861\\138 & 953 &  6 & 7 & 0.1583008901 & 84.12209034640\\
		139 & 967 &  30 & 8 & 0.1512678329 & 101.1618621765\\140 & 971 &  6 &
		8 & 0.4229069740 & 37.46879318756\\141 & 977 &  6 & 8 & 0.3979295188 & 39.69511757173\\142 & 983 &
		6 & 7 & 0.3929676394 & 35.08160442065\\

		\hline
	\end{tabular}
\end{center}

\begin{center}
	
	\begin{tabular}{|m{.6cm}|m{1cm}|m{0.5cm}|m{.6cm}|m{2.6cm}|m{2.8cm}|}
		
		\hline
		\multicolumn{6}{|c|}{\textbf{m=9}}\\
		\hline
		Sr. No.   &  $ q $  &  $l$ & $s$   &  $\delta>$   & $\Delta<$  \\		\hline
		1 & 8 &  1 & 3 & 0.6868808395 & 9.27928297361\\2 & 32 &  1 &
		6 & 0.6058719233 & 20.1556523351\\3 & 256 &  15 & 10 & 0.2247600824 & 86.5345836768\\4 & 27 &  2 &
		5 & 0.8203006416 & 12.9715871750\\5 & 81 &  10 & 7 & 0.3710793117 & 37.0329419816\\6 & 25 &  6 &
		5 & 0.5417068103 & 18.6141533179\\7 & 125 &  2 & 7 & 0.8015341751 & 18.2188967138\\8 & 49 &  2 &
		6 & 0.1256059766 & 89.5754505734\\9 & 121 &  6 & 6 & 0.1549639848 & 72.9842355352\\10 & 13 &  2 &
		3 & 0.3005452055 & 18.6364324132\\11 & 169 &  6 & 7 & 0.5539810296 & 25.4665075241\\12 & 19 &  2 &
		4 & 0.3136945189 & 24.3147029272\\13 & 23 &  2 & 5 & 0.4018876622 & 24.3943177322\\14 & 29 &  2 &
		5 & 0.5304017410 & 18.9682700922\\15 & 31 &  6 & 4 & 0.5933832129 & 13.7967610941\\16 & 37 &  6 &
		5 & 0.6412679962 & 16.0346938448\\17 & 43 &  6 & 6 & 0.5841034477 & 20.8322805539\\18 & 47 &  2 &
		5 & 0.7209393775 & 14.4837126122\\19 & 53 &  2 & 7 & 0.4825438637 & 28.9405560317\\20 & 61 &  6 &
		5 & 0.3202721293 & 30.1011027042\\21 & 79 &  6 & 5 & 0.5088901393 & 19.6855460618\\22 & 83 &  2 &
		6 & 0.7864070130 & 15.9876677309\\23 & 137 &  2 & 7 & 0.4098522021 & 33.7187511280\\24 & 139 &
		6 & 6 & 0.6499143708 & 18.9253066148\\25 & 211 &  6 & 7 & 0.0418582251 & 312.572174492\\26 &
		367 &  6 & 8 & 0.6259728662 & 25.9626999984\\27 & 379 &  6 & 8 & 0.6023697252 & 26.9016498843\\

		\hline
	\end{tabular}
\end{center}

\begin{center}
	
	\begin{tabular}{|m{.6cm}|m{1cm}|m{0.5cm}|m{.6cm}|m{2.6cm}|m{2.8cm}|}
		\hline
		\multicolumn{6}{|c|}{\textbf{m=10}}\\
		\hline
		Sr. No.   &  $ q $  &  $l$ & $s$   & $\delta>$   & $\Delta<$  \\		\hline
		
		1 & 8 &  3 & 5 & 0.4486640742 & 22.0595512691\\2 & 16 &  3 &
		6 & 0.1872057176 & 60.7588890956\\3 & 32 &  1 & 7 & 0.0740989085 & 177.441180654\\4 & 64 &  15 &
		9 & 0.2117365428 & 82.2884555226\\5 & 9 &  2 & 4 & 0.3837014528 & 20.2433502618\\6 & 27 &  2 &
		7 & 0.2734997902 & 49.5320291466\\7 & 25 &  6 & 6 & 0.5833290740 & 20.8572805452\\8 & 125 &  6 &
		9 & 0.3906958109 & 45.5121122997\\9 & 49 &  6 & 6 & 0.3993817568 & 29.5425700126\\10 & 11 &  6 &
		3 & 0.5992300546 & 10.3440407593\\11 & 13 &  6 & 4 & 0.5315733620 & 15.1684551944\\12 & 169 &  6 &
		9 & 0.0027277975 & 6234.13410652\\13 & 17 &  2 & 5 & 0.1035216197 & 88.9383615033\\14 & 19 &
		6 & 5 & 0.4018529013 & 24.3962548709\\15 & 23 &  2 & 6 & 0.0287330255 & 384.834727787\\16 &
		29 &  6 & 6 & 0.0629611420 & 176.710935087\\17 & 31 &  6 & 5 & 0.3691944084 & 26.3774006183\\18 &
		37 &  6 & 5 & 0.6636688020 & 15.5609809768\\19 & 41 &  6 & 6 & 0.0991906950 & 112.897498971\\20 &
		53 &  6 & 6 & 0.6471962543 & 18.9963900846\\21 & 59 &  6 & 7 & 0.2761402170 & 49.0775323510\\22 &
		61 &  6 & 6 & 0.3383038363 & 34.5151500418\\23 & 89 &  6 & 7 & 0.3326460941 & 41.0805731024\\24 &
		101 &  6 & 7 & 0.2304968711 & 58.3998979067\\25 & 113 &  6 & 8 & 0.3376043565 & 46.4307062605\\
		26 & 137 &  6 & 8 & 0.5629731164 & 28.6442562897\\27 & 139 &  30 &
		8 & 0.3194991731 & 48.9484783125\\28 & 149 &  6 & 8 & 0.3197656340 & 48.9093561186\\

		\hline
	\end{tabular}
\end{center}

\begin{center}
	
	\begin{tabular}{|m{.6cm}|m{1cm}|m{0.5cm}|m{.6cm}|m{2.6cm}|m{2.8cm}|}
		\hline
		\multicolumn{6}{|c|}{\textbf{m=11}}\\
		\hline
		Sr. No.   &  $q $  &  $l$ & $s$   & $\delta>$   & $\Delta<$  \\		\hline
		
		1 & 4 &  3 & 3 & 0.8876433104 & 7.63289323658\\2 & 16 &  3 &
		6 & 0.4816590055 & 24.8377334861\\3 & 9 &  2 & 4 & 0.8796476677 & 9.95773155158\\4 & 7 &  2 &
		3 & 0.3315455741 & 17.0808829601\\5 & 13 &  2 & 5 & 0.2391643945 & 39.6310195135\\

		\hline
	\end{tabular}
\end{center}

\begin{center}
	
	\begin{tabular}{|m{.6cm}|m{1cm}|m{0.5cm}|m{.6cm}|m{2.6cm}|m{2.8cm}|}
		\hline
		\multicolumn{6}{|c|}{\textbf{m=12}}\\
		\hline
		Sr. No.   &  $q $  &  $l$ & $s$   & $\delta>$   & $\Delta<$  \\		\hline
		
		1 & 8 &  15 & 6 & 0.3553764643 & 32.9530908855\\2 & 16 &  15 &
		7 & 0.4031213203 & 34.2483563718\\3 & 32 &  15 & 9 & 0.2117365428 & 82.2884555226\\4 & 64 &  15 &
		10 & 0.2247600824 & 86.5345836768\\5 & 9 &  2 & 6 & 0.0839532181 & 133.025352703\\6 & 27 &  10 &
		7 & 0.3710793117 & 37.0329419816\\7 & 81 &  10 & 10 & 0.3292579234 & 59.7055209525\\8 & 7 &  6 &
		5 & 0.2833293365 & 33.7651539659\\9 & 49 &  6 & 9 & 0.2390141275 & 73.1255028112\\10 & 11 &  30 &
		6 & 0.3665449529 & 32.0099617018\\11 & 13 &  6 & 6 & 0.1510424768 & 74.8271955803\\12 & 17 &  6 &
		6 & 0.0849353562 & 131.510259107\\13 & 19 &  6 & 6 & 0.1310410249 & 85.9431773675\\14 & 23 &
		30 & 7 & 0.2612500840 & 51.7607495390\\15 & 29 &  6 & 9 & 0.0252274308 & 675.869649089\\16 & 31 &
		30 & 6 & 0.3950778845 & 29.8426113697\\17 & 37 &  30 & 8 & 0.2996854761 & 52.0524756523\\18 & 41 &
		6 & 8 & 0.0673048338 & 224.866607782\\19 & 43 &  30 & 7 & 0.3070086779 & 44.3440799377\\20 &
		47 &  30 & 9 & 0.2474313715 & 70.7059199300\\21 & 89 &  30 & 9 & 0.2368116110 & 73.7870205977\\

		\hline
	\end{tabular}
\end{center}

\begin{center}
	
	\begin{tabular}{|m{.6cm}|m{1cm}|m{0.5cm}|m{.6cm}|m{2.6cm}|m{2.8cm}|}	\hline
		\multicolumn{6}{|c|}{\textbf{m=14}}\\
		\hline
		Sr. No.   &  $q $  &  $l$ & $s$   & $\delta>$   & $\Delta<$  \\		\hline
		
		1 & 4 &  3 & 5 & 0.4510757083 & 21.9523047555\\2 & 3 &  2 &
		2 & 0.9945138667 & 5.01654919096\\3 & 5 &  2 & 4 & 0.2598110717 & 28.9426547206\\

		\hline
	\end{tabular}
\end{center}

\begin{center}
	
	\begin{tabular}{|m{.6cm}|m{1cm}|m{0.5cm}|m{.6cm}|m{2.6cm}|m{2.8cm}|}
		\hline
		\multicolumn{6}{|c|}{\textbf{m=15}}\\
		\hline
		Sr. No.   &  $q $  &  $l$ & $s$   & $\delta>$   & $\Delta<$  \\		\hline
		1 & 2 &  1 & 3 & 0.6365245521 & 9.85515654217\\2 & 4 &  3 &
		5 & 0.4486640742 & 22.0595512691\\3 & 16 &  15 & 9 & 0.2117365428 & 82.2884555226\\4 & 3 &  2 &
		3 & 0.6638971640 & 9.53128687857\\5 & 9 &  2 & 7 & 0.2734997902 & 49.5320291466\\6 & 5 &  2 &
		5 & 0.7132981862 & 14.6174441121\\

		\hline
	\end{tabular}
\end{center}

\begin{center}
	
	\begin{tabular}{|m{.6cm}|m{1cm}|m{0.5cm}|m{.6cm}|m{2.6cm}|m{2.8cm}|}
		\hline
		\multicolumn{6}{|c|}{\textbf{m=16}}\\
		\hline
		Sr. No.   &  $q $  &  $l$ & $s$   & $\delta>$   & $\Delta<$  \\		\hline
		1 & 4 &  3 & 4 & 0.4745403228 & 16.7511173704\\2 & 8 &  3 &
		8 & 0.0031213203 & 4807.65859167\\3 & 3 &  2 & 4 & 0.4232097591 & 18.5402613007\\4 & 5 &  2 &
		5 & 0.0552762647 & 164.818526912\\

		\hline
	\end{tabular}
\end{center}
\begin{center}
	
	\begin{tabular}{|m{.6cm}|m{1cm}|m{0.5cm}|m{.6cm}|m{2.6cm}|m{2.8cm}|}
		\hline
		\multicolumn{6}{|c|}{\textbf{m=18}}\\
		\hline
		Sr. No.   &  $q $  &  $l$ & $s$   & $\delta>$   & $\Delta<$  \\		\hline
		1 & 4 &  15 & 6 & 0.3553764643 & 32.9530908855\\2 & 3 &  2 &
		5 & 0.3984803405 & 24.5858068362\\

		\hline
	\end{tabular}
\end{center}

\begin{center}
	
	\begin{tabular}{|m{.6cm}|m{1cm}|m{0.5cm}|m{.6cm}|m{2.6cm}|m{2.8cm}|}
		\hline
		\multicolumn{6}{|c|}{\textbf{m=20}}\\
		\hline
		Sr. No.   &  $q $  &  $l$ & $s$   & $\delta>$   & $\Delta<$  \\		\hline
		1 & 4 &  3 & 6 & 0.1872057176 & 60.7588890956\\2 & 8 &  15 &
		9 & 0.2117365429 & 82.2884555226\\

		\hline
	\end{tabular}
\end{center}

\begin{center}
	
	\begin{tabular}{|m{.6cm}|m{1cm}|m{0.5cm}|m{.6cm}|m{2.6cm}|m{2.8cm}|}
		\hline
		\multicolumn{6}{|c|}{\textbf{m=22}}\\
		\hline
		Sr. No.   &  $q $  &  $l$ & $s$   & $\delta>$   & $\Delta<$  \\		\hline
		1 & 2 &  1 & 4 & 0.2209766437 & 33.6775559615\\
		
		\hline
	\end{tabular}
\end{center}

\begin{center}
	
	\begin{tabular}{|m{.6cm}|m{1cm}|m{0.5cm}|m{.6cm}|m{2.6cm}|m{2.8cm}|}
		\hline
		\multicolumn{6}{|c|}{\textbf{m=24}}\\
		\hline
		Sr. No.   &  $q $  &  $l$ & $s$   & $\delta>$   & $\Delta<$  \\		\hline
		1 & 4 &  3 & 8 & 0.0031213203 & 4807.65859167\\2 & 3 &  2 &
		6 & 0.0839532181 & 133.025352703\\
		
		\hline
	\end{tabular}
\end{center}

\begin{center}
	\begin{tabular}{|m{.6cm}|m{1cm}|m{0.5cm}|m{.6cm}|m{2.6cm}|m{2.8cm}|}
		\hline
		\multicolumn{6}{|c|}{\textbf{m=28}}\\
		\hline
		Sr. No.   &  $q $  &  $l$ & $s$   & $\delta>$   & $\Delta<$  \\		\hline
		1 & 2 &  3 & 5 & 0.4510757083 & 21.9523047555\\
		
		\hline
	\end{tabular}
\end{center}

\begin{center}
	
	\begin{tabular}{|m{.6cm}|m{1cm}|m{0.5cm}|m{.6cm}|m{2.6cm}|m{2.8cm}|}
		\hline
		\multicolumn{6}{|c|}{\textbf{m=30}}\\
		\hline
		Sr. No.   &  $q $  &  $l$ & $s$   & $\delta>$   & $\Delta<$  \\		\hline
		1 & 2 &  3 & 5 & 0.4486640742 & 22.0595512691\\
		
		\hline
	\end{tabular}
\end{center}

\begin{center}
	
	\begin{tabular}{|m{.6cm}|m{1cm}|m{0.5cm}|m{.6cm}|m{2.6cm}|m{2.8cm}|}
		\hline
		\multicolumn{6}{|c|}{\textbf{m=36}}\\
		\hline
		Sr. No.   &  $q $  &  $l$ & $s$   & $\delta>$   & $\Delta<$  \\		\hline
		1 & 2 &  15 & 6 & 0.3553764643 & 32.9530908855\\
		
		\hline
	\end{tabular}
\end{center}

\end{document}